%
%
%
\documentclass[reqno]{amsart}
\usepackage{amsthm}
\usepackage{times,amssymb,amsmath,amsfonts,bbm,mathrsfs,color,nicefrac,graphicx,enumitem,booktabs}
\usepackage[mathscr]{eucal}
\interdisplaylinepenalty=5000

\usepackage[top=1.5in,bottom=1.43in,left=1.25in,right=1.25in]{geometry}
\allowdisplaybreaks
\usepackage{silence}
\usepackage{hyperref} 
\usepackage[font={it}]{caption}
\usepackage[margin=0.05in]{subcaption}
\usepackage[bottom]{footmisc}

\usepackage[normalem]{ulem}
\setlength{\parskip}{0.1\baselineskip minus 2pt}

\newcommand\nc\newcommand
\nc\bfa{{\boldsymbol a}}\nc\bfA{{\boldsymbol A}}\nc\cA{{\mathscr A}}
\nc\bfb{{\boldsymbol b}}\nc\bfB{{\boldsymbol B}}\nc\cB{{\mathscr B}}
\nc\bfc{{\boldsymbol c}}\nc\bfC{{\boldsymbol C}}\nc\cC{{\mathscr C}}
\nc\bfd{{\boldsymbol d}}\nc\bfD{{\boldsymbol D}}\nc\cD{{\mathscr D}}
\nc\bfe{{\boldsymbol e}}\nc\bfE{{\boldsymbol E}}\nc\cE{{\mathscr E}}
\nc\bff{{\boldsymbol f}}\nc\bfF{{\boldsymbol F}}\nc\cF{{\mathscr F}}
\nc\bfg{{\boldsymbol g}}\nc\bfG{{\boldsymbol G}}\nc\cG{{\mathscr G}}
\nc\bfh{{\boldsymbol h}}\nc\bfH{{\boldsymbol H}}\nc\cH{{\mathscr H}}
\nc\bfi{{\boldsymbol i}}\nc\bfI{{\boldsymbol I}}\nc\cI{{\mathcal I}}
\nc\bfj{{\boldsymbol j}}\nc\bfJ{{\boldsymbol J}}\nc\cJ{{\mathscr J}}
\nc\bfk{{\boldsymbol k}}\nc\bfK{{\boldsymbol K}}\nc\cK{{\mathscr K}}
\nc\bfl{{\boldsymbol l}}\nc\bfL{{\boldsymbol L}}\nc\cL{{\mathscr L}}
\nc\bfm{{\boldsymbol m}}\nc\bfM{{\boldsymbol M}}\nc{\cM}{{\mathscr M}}
\nc\bfn{{\boldsymbol n}}\nc\bfN{{\boldsymbol N}}\nc\cN{{\mathscr N}}
\nc\bfo{{\boldsymbol o}}\nc\bfO{{\boldsymbol O}}\nc\cO{{\mathscr O}}
\nc\bfp{{\boldsymbol p}}\nc\bfP{{\boldsymbol P}}\nc\cP{{\mathscr P}}\nc\eP{{\EuScriptP}}\nc\fP{{\mathfrak P}}
\nc\bfq{{\boldsymbol q}}\nc\bfQ{{\boldsymbol Q}}\nc\cQ{{\mathscr Q}}
\nc\bfr{{\boldsymbol r}}\nc\bfR{{\boldsymbol R}}\nc\cR{{\mathscr R}}
\nc\bfs{{\boldsymbol s}}\nc\bfS{{\boldsymbol S}}\nc\cS{{\mathscr S}}
\nc\bft{{\boldsymbol t}}\nc\bfT{{\boldsymbol T}}\nc\cT{{\mathscr T}}
\nc\bfu{{\boldsymbol u}}\nc\bfU{{\boldsymbol U}}\nc\cU{{\mathscr U}}
\nc\bfv{{\boldsymbol v}}\nc\bfV{{\boldsymbol V}}\nc\cV{{\mathscr V}}
\nc\bfw{{\boldsymbol w}}\nc\bfW{{\boldsymbol W}}\nc\cW{{\mathscr W}}
\nc\bfx{{\boldsymbol x}}\nc\bfX{{\boldsymbol X}}\nc\cX{{\mathscr X}}
\nc\bfy{{\boldsymbol y}}\nc\bfY{{\boldsymbol Y}}\nc\cY{{\mathscr Y}}
\nc\bfz{{\boldsymbol z}}\nc\bfZ{{\boldsymbol Z}}\nc\cZ{{\mathscr Z}}
\nc{\bb}{{\mathbbm{1}}}
\nc\reals{{\mathbb R}}
\nc{\half}{{\nicefrac12}}

\newtheorem{theorem}{Theorem}[section]
\newtheorem{lemma}[theorem]{Lemma}

\newtheorem{proposition}[theorem]{Proposition}

\newtheorem{corollary}[theorem]{Corollary}

\theoremstyle{remark}

\DeclareMathOperator{\diam}{diam}

\renewcommand{\hat}{\widehat}

\begin{document}
		\title{Stolarsky's invariance principle for finite metric spaces}
		\author{Alexander Barg}\address{Department of ECE and Institute for Systems Research, University of Maryland, College Park, MD 20742, USA and Inst. for Probl. Inform. Trans., Moscow, Russia}\email{abarg@umd.edu}

 \begin{abstract}  
Stolarsky's invariance principle quantifies the deviation of a subset of a metric space from the uniform distribution. Classically derived for spherical sets, it has been recently studied in a number of other situations, revealing a general structure behind various forms of the main identity. In this work we consider the case of finite metric spaces, relating the quadratic discrepancy of a subset to a certain function of the distribution of distances in it. Our main results are related to a concrete form of the invariance principle for the Hamming space. We derive several equivalent versions of the expression for the discrepancy of a code, including expansions of the discrepancy and associated kernels in the Krawtchouk basis. Codes that have the smallest possible quadratic discrepancy among all subsets of the same cardinality can be naturally viewed as energy minimizing subsets in the space.  Using linear programming, we find several bounds on the minimal discrepancy and give examples of minimizing configurations. In particular, we show that all binary perfect codes have the smallest possible discrepancy.
 \end{abstract}
 \maketitle
 
\renewcommand{\thefootnote}{\arabic{footnote}}
\setcounter{footnote}{0}		
		
{\renewcommand{\thefootnote}{}\footnotetext{
\vspace*{-.15in}

{This paper appears in {\em Mathematika}, vol.~{\bf 67}, no.~1, 2021, pp. 158--186, \href{https://londmathsoc.onlinelibrary.wiley.com/doi/10.1112/mtk.12066}{doi:10.1112/mtk12066}. This version corrects the statement of Theorem 6.1 and contains a new Appendix with a simple proof of Eq.~\eqref{eq:SoS}.
}}}
\renewcommand{\thefootnote}{\arabic{footnote}}
\setcounter{footnote}{0}

\section{Introduction}
 Let ${\cX}$ be a finite metric space with $\diam({\cX})=n,$ where $n$ is a positive integer. 
Suppose that the distance $d(x,y)$ takes the values $0,1,\dots,n$ and let
$B(x,t)$ be a metric ball in ${\cX}$ of radius $t$ with center $x$. We assume that ${\cX}$ is distance-invariant, so the volume of the ball $B(x,t)$
does not depend on the center.
For an $N$-point subset $Z=\{z_1,\dots,z_N\}\subset {\cX}$ define the quadratic discrepancy of $Z$ as follows:
   \begin{equation}\label{eq:D}
   D^{L_2}(Z)=\sum_{t=0}^n (D_t(Z))^2
   \end{equation}
where 
   \begin{equation}\label{eq:DG}
   D_t(Z):=\Big( \sum_{x\in {\cX}}\Big(\frac 1N \sum_{j=1}^N \mathbbm{1}_{B(x,t)}(z_j)-\frac1{|{\cX}|}|B(x,t)|\Big)^2\Big)^{1/2}.
   \end{equation}
The discrepancy measures the quadratic deviation of $Z$ from the uniform distribution in regards to the metric balls in ${\cX}.$ In this paper we
relate $D^{L_2}(Z)$ to the structure of distances in the subset $Z$ with a special attention to the Hamming space ${\cX}=\{0,1\}^n.$

A general problem of estimating quadratic discrepancy in metric spaces has a long history which developed both from the perspective
of approximation theory and geometry of the space \cite{Beck1987,Matousek1999}, with special attention devoted to subsets of the real sphere $S^d$.
The quadratic discrepancy for a configuration $Z\subset S^d$ is defined as 
  \begin{equation}\label{eq:Sd}
  D^{L_2}(Z)=\int_{-1}^1\int_{S^d}\Big(\frac 1N\sum_{i=1}^N\mathbbm{1}_{C(x,t)}(z_i)-\sigma (C(x,t))\Big)^2d\sigma(x)dt
  \end{equation}
where $C(x,t):=\{y\in S^d|(x,y)\ge t\}$ is a spherical cap with center at $x\in S^d$ and $d\sigma(x)$
is the normalized surface measure. Several recent works studied discrepancy of finite point sets on $S^d$
and related homogeneous spaces, e.g., the real and complex projective spaces \cite{Bilyk2018,Brauchart2013,Skriganov2020,Skriganov2019}.
These studies revolve around a unifying topic that originates in Stolarsky's works \cite{Stolarsky1973,Stolarsky1975} and is related to the following remarkable identity, nowadays called Stolarsky's invariance principle:
for a finite subset $Z\subset S^d$ 
   \begin{equation}\label{eq:Ssph}
   D^{L_2}(Z)=C_d(\langle \|x-y\|\rangle_{S^d}-\langle\|x-y\|\rangle_Z),
   \end{equation}
where $\langle\cdot\rangle$ refers to the average value of the argument over the subscript set, and $C_d$ is a universal constant that depends
only on the dimension $d$. This relation enables one, among other things, to establish universal bounds on discrepancy for various classes of configurations \cite{Bilyk2018,Skriganov2017,Skriganov2019}, and affords a number of generalizations. Among them,
invariance for $D^{L_2}(Z)$ defined with respect to the geodesic distance on $S^d$, or with respect to other subsets of $S^d$, or for a continuous analog of the discrepancy \cite{Bilyk2018}. The invariance principle also connects $D^{L_2}(Z)$ to
a classical problem of Fejes T{\'o}th of maximizing the sum of distances over finite sets of a given cardinality \cite{Bilyk2019}. The cited
papers also offer insightful general discussions of, as well as many more references for, the invariance principle; of them we mention \cite{Bilyk2018,Skriganov2019} as our main motivation.

In this paper we study a discrete version of the invariance principle, modeling our definition of quadratic discrepancy \eqref{eq:D} on the
classic definition \eqref{eq:Sd}. We begin with general finite metric spaces, deriving a version of the relation \eqref{eq:Ssph}. It turns out that the quadratic discrepancy of a subset $Z$ of a finite metric space ${\cX}$ is conveniently expressed via a function on ${\cX}\times {\cX}$ defined
as 
   \begin{equation}\label{eq:lambda0}
     \lambda(x,y)=\frac 12\sum\nolimits_{u\in {\cX}}|d(x,u)-d(y,u)|.
   \end{equation}
A general form of the Stolarsky invariance principle, proved in Sec.~\ref{sec:Stolarsky}, asserts that $D^{L_2}(Z)$ equals the difference between the average value of $\lambda$ over the entire space and its average over $Z.$ This result is implicit in earlier works such as \cite{Brauchart2013,Bilyk2018,Skriganov2019}, and we comment on their results in the main text of the paper.

Our main results are related to the case of the Hamming space ${\cX}=\{0,1\}^n$. In Sec.~\ref{sec:Hamming} we give several equivalent, but different-looking expressions for $\lambda,$ showing that $\lambda(x,y)$ is expressed in terms of the central binomial coefficient $\binom w{w/2}$ (after accounting 
for integrality constraints), where $w=d(x,y).$ This result enables us to connect the discrepancy of a binary code with 
the structure of distances in it, and to find exact expressions or bounds for discrepancy of several classes of binary codes. 

Analyzing discrepancy of finite sets in metric spaces is often facilitated by considering expansions of $D^{L_2}$ and associated kernels into 
series of spherical functions. Such expansions were studied for the sphere $S^d$ and related projective spaces in \cite{BilykDai2019,Skriganov2019,Skriganov2020}, where the spherical functions are given by certain Jacobi polynomials. The authors of the cited works used estimates of the coefficients in the expansions to derive bounds on discrepancy of finite point configurations. Paper \cite{Skriganov2019} also showed that optimal spherical designs have asymptotically the smallest possible discrepancy among sets of their cardinality. Related ideas on a connection between spherical designs and asymptotically uniformly distributed spherical sets were addressed in
\cite{Brauchart2019}. In order to derive estimates of discrepancy for binary codes of a given cardinality, in Sec.~\ref{sec:Fourier} we derive Fourier-Krawtchouk expansions of the kernel $\lambda(\cdot)$ and the discrepancy $D^{L_2}(Z).$

The discrepancy kernel forms an example of a potential function on ${\cX}$, and thus, minimizing the value $D^{L_2}(Z)$ over codes $Z$ of a
given size can be addressed via the linear programming approach to energy minimization. This line of research started with the work
\cite{Yudin1993} and has enjoyed considerable attention in recent years. Most works on energy-minimizing configurations address the setting of 
finite point sets on the sphere in $\reals^d$ and related homogeneous spaces; see a comprehensive recent book \cite{BHS2019} for an extensive 
overview as well as numerous references. Universal bounds on energy of spherical codes \cite{Cohn2007,BDHSS2016,BDHDSS2019} can be obtained for the class of {\em completely monototic} potentials such as the Euclidean distance.

Turning to the Hamming space, linear programming bounds on energy of codes were studied in recent papers \cite{BDHSS2017,Cohn2014} as well as in 
earlier works \cite{ash99a,ash01b}. At the same time, universal bounds of \cite{BDHSS2017,BDHSS2019a} are not applicable to the problem
of discrepancy because the potential $\lambda(\cdot,\cdot)$ fails to be completely monotonic. In particular, this implies that the universally optimal codes listed in \cite{Cohn2014} are not necessarily LP-optimal for the discrepancy problem. We use linear programming together with the Krawtchouk expansion of the function $\lambda(d(x,y))$ to derive several lower bounds on $D^{L_2}(N)$. We also prove that the Hamming codes and other perfect codes have the smallest possible discrepancy among all codes of their cardinality.

By combining random choice and linear programming estimates, 
we prove the following bounds on the minimum discrepancy of binary codes of length $n$ and size $N$. Define the extremal discrepancy as
  \begin{equation}\label{eq:Dmin}
  D^{L_2}(n,N):=\min_{Z\subset\{0,1\}^n, |Z|=N} D^{L_2}(Z),
  \end{equation}
then we show that 
\begin{equation*}
    c\frac1{\sqrt{n}}\frac {2^n}{N}\le D^{L_2}(n,N)\le C\sqrt{n}\frac{2^n}{N},
\end{equation*}
where $c,C$ are some constants that depend only on $n$; see Theorem \ref{prop:bounds} below.

\section{Stolarsky's invariance for a finite metric space}\label{sec:Stolarsky}

The quadratic discrepancy of $Z\subset {\cX}$ can be expressed as the difference between the average values of $\lambda$ over the entire space ${\cX}$ and the subset $Z$ as follows.
\begin{theorem}[{\sc Stolarsky's invariance principle}]\label{thm:SG} Let $Z=\{z_1,\dots,z_N\}$ be a subset of a finite metric space ${\cX}.$ Then
   \begin{equation}\label{eq:SG}
   D^{L_2}(Z)=\frac12\Big(\frac1{|{\cX}|^2}\sum_{x,y\in{\cX}}\sum_{u\in{\cX}}|d(x,u)-d(y,u)|
      -\frac 1{N^2}\sum_{i,j=1}^N\sum_{u\in{\cX}}|d(z_i,u)-d(z_j,u)|\Big).
   \end{equation}
\end{theorem}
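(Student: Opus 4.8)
The plan is to expand the square that defines $D^{L_2}(Z)$ and reduce the whole expression to sums of pairwise $\max$-functions of distances, which is precisely the combinatorial content of $\lambda$. Write $f_t(x)=\frac1N\sum_{j=1}^N\mathbbm{1}_{B(x,t)}(z_j)$ and $g_t(x)=\frac1{|{\cX}|}|B(x,t)|$, so that $D^{L_2}(Z)=\sum_{t=0}^n\sum_{x\in{\cX}}(f_t(x)-g_t(x))^2$. Since ${\cX}$ is distance-invariant, $|B(x,t)|=:V_t$ does not depend on the center $x$, so $g_t\equiv V_t/|{\cX}|$ is a \emph{constant} function on ${\cX}$; moreover $\sum_{x\in{\cX}}f_t(x)=\frac1N\sum_j|B(z_j,t)|=V_t=|{\cX}|\,g_t$. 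Hence the cross term collapses and $\sum_{x}(f_t-g_t)^2=\sum_x f_t^2-\frac1{|{\cX}|}V_t^2$, giving
\[
D^{L_2}(Z)=\sum_{t=0}^n\sum_{x\in{\cX}}f_t(x)^2-\frac1{|{\cX}|}\sum_{t=0}^nV_t^2 .
\]

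Next I would open up $\sum_{t}\sum_x f_t(x)^2=\frac1{N^2}\sum_{i,j=1}^N\sum_{x\in{\cX}}\sum_{t=0}^n\mathbbm{1}[d(x,z_i)\le t]\,\mathbbm{1}[d(x,z_j)\le t]$ and invoke the elementary identity $\sum_{t=0}^n\mathbbm{1}[t\ge a]\,\mathbbm{1}[t\ge b]=n+1-\max(a,b)$, valid for integers $0\le a,b\le n$ (this is where $\diam({\cX})=n$ is used). Combining it with $\max(a,b)=\tfrac12(a+b+|a-b|)$ and summing over $x$ yields, for each pair $(i,j)$,
\[
\sum_{x\in{\cX}}\big(n+1-\max(d(x,z_i),d(x,z_j))\big)=(n+1)|{\cX}|-\tfrac12\big(\Sigma(z_i)+\Sigma(z_j)\big)-\lambda(z_i,z_j),
\]
where $\Sigma(y):=\sum_{x\in{\cX}}d(x,y)$ and $\lambda$ is as in \eqref{eq:lambda0}. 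By distance-invariance $\Sigma(y)$ is independent of $y$; writing $\Sigma$ for the common value and averaging over $i,j$ gives $\sum_{t}\sum_x f_t(x)^2=(n+1)|{\cX}|-\Sigma-\frac1{N^2}\sum_{i,j}\lambda(z_i,z_j)$.

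Finally I would run the same computation for the full space: taking $Z={\cX}$ makes $f_t=g_t$, so $\frac1{|{\cX}|}\sum_t V_t^2=\sum_t\sum_x g_t(x)^2$ equals $(n+1)|{\cX}|-\Sigma-\frac1{|{\cX}|^2}\sum_{x,y\in{\cX}}\lambda(x,y)$ by the identical argument. Subtracting the two displays, the constants $(n+1)|{\cX}|$ and $\Sigma$ cancel, leaving
\[
D^{L_2}(Z)=\frac1{|{\cX}|^2}\sum_{x,y\in{\cX}}\lambda(x,y)-\frac1{N^2}\sum_{i,j=1}^N\lambda(z_i,z_j),
\]
which is exactly \eqref{eq:SG} once $\lambda$ is written out. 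I expect no genuine obstacle here — the argument is an exact algebraic identity, not an estimate — but the one thing to be careful about is the bookkeeping of the constant terms: one must verify that the two expansions produce the \emph{same} constants so that they cancel, and this relies on distance-invariance being applied twice (to make $g_t$ constant and to make $\Sigma(y)$ constant) and on handling the ``$+1$'' in the telescoping identity correctly.
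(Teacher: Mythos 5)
Your proposal is correct and follows essentially the same route as the paper's proof: expanding the square so that only the $\sum_x f_t^2$ and $\sum_t V_t^2$ terms survive, converting $\sum_t \mathbbm{1}\,\mathbbm{1}$ into $n+1-\max$, using $\max(a,b)=\tfrac12(a+b+|a-b|)$ together with distance-invariance, and evaluating $\sum_t V_t^2$ by the full-space version of the same identity (your ``take $Z={\cX}$'' step is exactly the paper's Eq.~\eqref{eq:sq1}). The bookkeeping of the constants $(n+1)|{\cX}|$ and $\Sigma$ cancels just as you anticipate, so no gap remains.
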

\begin{proof}
Starting with \eqref{eq:DG}, we compute
   \begin{align}
    D_t(Z)^2&=\sum_{x\in {\cX}}\Big[\Big(\frac 1{N}\sum_{j=1}^N\mathbbm{1}_{B(x,t)}(z_j)\Big)^2-\frac 2N\sum_{j=1}^N\mathbbm{1}_{B(x,t)}(z_j)
    \frac{|B(x,t)|}{|{\cX}|}+\frac{|B(x,t)|^2}{|{\cX}|^2}\Big]\nonumber\\
    &=\frac 1{N^2}\sum_{x\in {\cX}}\Big(\sum_{j=1}^N\mathbbm{1}_{B(x,t)}(z_j)\Big)^2-\frac 2N\sum_{j=1}^N\sum_{x\in{\cX}}\mathbbm{1}_{B(z_j,t)}(x)
      \frac{|B(x,t)|}{|{\cX}|}+\frac{|B(x,t)|^2}{|{\cX}|}\nonumber\\
      &=\frac 1{N^2}\Big(\sum_{x\in{\cX}}\sum_{i,j=1}^N \mathbbm{1}_{B(x,t)}(z_i)\mathbbm{1}_{B(x,t)}(z_j)\Big)-\frac{|B(x,t)|^2}{|{\cX}|}\nonumber\\
      &=\frac1{N^2}\sum_{i,j=1}^N |B(z_i,t)\cap B(z_j,t)|-\frac{|B(u,t)|^2}{|{\cX}|}, \label{eq:cap1}
   \end{align}
where on the last line $u$ is any fixed point in ${\cX}.$

To find an expression for $D^{L_2}(Z)$ in \eqref{eq:D}, we need to sum \eqref{eq:cap1} on $t$.
Let us compute average intersection of the metric balls with centers at $x$ and $y$:
   \begin{align}
      \sum_{t=0}^n |B(x,t)\cap B(y,t)|&=\sum_{t=0}^n\sum_{z\in {\cX}} \mathbbm{1}_{B(x,t)}(z)\mathbbm{1}_{B(y,t)}(z)=
        \sum_{z\in{\cX}}\sum_{t=0}^n \mathbbm{1}_{B(x,t)}(z)\mathbbm{1}_{B(y,t)}(z)\nonumber\\
        &=\sum_{z\in{\cX}}\sum_{t=\max(d(z,x),d(z,y))}^n 1=\sum_{z\in {\cX}}(n+1-\max(d(z,x),d(z,y)))\nonumber\\
        &=|{\cX}|(n+1)-\sum_{z\in{\cX}}\max(d(z,x),d(z,y))\nonumber\\
        &=|{\cX}|(n+1)-\sum_{z\in{\cX}}\frac12(d(z,x)+d(z,y)+|d(z,x)-d(z,y)|)\nonumber\\
        &=|{\cX}|(n+1)-\sum_{z\in {\cX}}d(z,u)-\frac 12\sum_{z\in {\cX}}|d(z,x)-d(z,y)|.\label{eq:cap2}
  \end{align}

Now let us address the second term in \eqref{eq:cap1}:
   \begin{align}
   \sum_{t=0}^n|B(u,t)|^2&=\sum_{t=0}^n\sum_{u\in{\cX}}\mathbbm{1}_{B(x,t)}(u)\sum_{y\in{\cX}}\mathbbm{1}_{B(u,t)}(y)\nonumber\\
   &=\sum_{t=0}^n\sum_{y}\sum_{u}\mathbbm{1}_{B(x,t)}(u)\mathbbm{1}_{B(y,t)}(u)=\sum_y\sum_{t=0}^n|B(x,t)\cap B(y,t)|
   \nonumber\\
   &=\frac1{|{\cX}|}\sum_{x,y\in{\cX}}\sum_{t=0}^n |B(x,t)\cap B(y,t)|\label{eq:sq1}\\
   &=\frac1{|{\cX}|}\sum_{x,y\in{\cX}}\Big[|{\cX}|(n+1)-\sum_{z\in {\cX}}d(z,u)-\frac 12\sum_{z\in {\cX}}|d(z,x)-d(z,y)|\Big],\label{eq:square}
   \end{align}
where to obtain \eqref{eq:square} we used \eqref{eq:cap2}, and where $u$ on the last line is any fixed point in $\cX.$ Substituting \eqref{eq:cap2} and \eqref{eq:square} into \eqref{eq:cap1}
and rearranging, we obtain \eqref{eq:SG}.
\end{proof}

Similar proofs of Stolarsky's principle for the case of the sphere $S^d(\reals)$ were given earlier in \cite{Brauchart2013,BilykLacey2017,Bilyk2018}, see also
 \cite[Sec.6.8]{BHS2019}.
In particular, the authors of \cite{Bilyk2018} used essentially the same geometric ideas, and we adopted them here for a finite metric space. Paper
\cite{Skriganov2019} considered the case of a general distance-transitive metric space ${\cX}$ equipped with the metric of symmetric
difference. In the case of finite ${\cX}$ this metric is defined as follows:
  $$
  \theta(x,y):=\frac 12\sum_{t=0}^n|B(x,t)\bigtriangleup B(y,t)|=\sum_{t=0}^n |B(x,t)|-\sum_t|B(x,t)\cap B(y,t)|,
  $$
and this definition generalizes to an arbitrary metric measure space ${\cX}$ in an obvious way. To see that $\theta$ is indeed a 
metric, note that
  $$
  \theta(x,y)=\frac12\sum_{t}\sum_{u\in{\cX}}|\mathbbm{1}_{B(x,t)}(u)-\mathbbm{1}_{B(y,t)}(u)|,
  $$
which is the $L_1$ distance between the indicator functions of the metric balls. As observed in \cite[Eq.(1.33)]{Skriganov2019}, the quadratic discrepancy of a finite zet $Z\subset{\cX}$ equals the difference between the average value of $\theta$ over the entire space and its average value over $Z.$ The author of \cite{Skriganov2019} called this relation the $L_1$ invariance principle as opposed to the more subtle $L_2$ principle given by \eqref{eq:Ssph}.

The form of the invariance principle considered above is related to the kernel $\lambda:{\cX}\times{\cX}\to \reals$ defined in \eqref{eq:lambda0}.
Using it, we can rewrite the invariance principle \eqref{eq:SG} concisely as follows:
  \begin{align}\nonumber
  D^{L_2}(Z)&=\frac 1{2^{2n}}\sum_{x,y\in {\cX}} \lambda(x,y)-\frac 1{N^2}\sum_{i,j=1}^N\lambda(z_i,z_j)\\
    &=\langle\lambda\rangle_{{\cX}}-\langle\lambda\rangle_{Z},\label{eq:kernel}
  \end{align}   
where the quantities on the last line represent the average values of $\lambda.$ Another equivalent form of \eqref{eq:kernel} is obtained
once we define the kernel 
   $$
   \mu(x,y)=\sum_{t=0}^n\mu_t(x,y),
   $$
where
  \begin{equation}\label{eq:mut}
  \mu_t(x,y):=|B(x,t)\cap B(y,t)|=\sum_{z\in{\cX}}\mathbbm{1}_{B(x,t)}(z)\mathbbm{1}_{B(y,t)}(z).
  \end{equation}
On account of \eqref{eq:cap1}, \eqref{eq:sq1}, and \eqref{eq:D} we can rewrite the expression for the quadratic discrepancy of the
subset $Z$ as follows:
  \begin{align}\label{eq:SH3mu}
   D^{L_2}(Z)&=\langle \mu\rangle_Z-\langle \mu\rangle_{{\cX}}.
   \end{align}

Both \eqref{eq:kernel} and \eqref{eq:SH3mu} have their advantages for the calculations in the Hamming space which form our main results. Namely, 
\eqref{eq:SG} is directly related to the distances in the graph while \eqref{eq:mut} is a convolution square of a function, which facilitates
the Fourier transform approach to discrepancy. Additionally, although less importantly, while both kernels $\mu$ and $\lambda$ are radial (depend
only on the distance $d(x,y)$), the former is also positive definite, which facilitates calculations of the linear-programming bounds on discrepancy. 

 \section{Stolarsky's invariance for the Hamming space}\label{sec:Hamming}
In Sections \ref{sec:Hamming}-\ref{sec:LP}, the notation ${\cX_n}=\{0,1\}^n$ refers to the binary Hamming space. For any pair of vectors $x,y\in{\cX_n}$ let $d(x,y)$ denote the Hamming distance between them. 
As above, we let $B(x,t)$ denote the ball of radius $t,0\le t\le n$ with center at $x\in {\cX_n}$ and note that
the volume $|B(x,t)|=\sum_{i=0}^t \binom nt$ does not depend on $x$. We also note the following relations 
for future use:
  for any $x\in{\cX_n}$
  \begin{align}\label{eq:sumB}
  &\sum_{t=0}^n|B(x,t)|=\sum_{t=0}^n\sum_{i=0}^t\binom ni=\sum_{i=0}^n (i+1)\binom ni=(n+2)2^{n-1}  \\ 
 \label{eq:sqsum}
   &\sum_{t=0}^n|B(x,t)|^2=2^{2n-1}(n+2)-\frac n2\binom{2n}{n}.
   \end{align}
The first of these equalities is obvious and the second was proved in \cite{Hirsch1996}.

In this section
we derive an explicit form of the Stolarsky principle \eqref{eq:SG} for the Hamming space.
As before, let $Z$ be an $N$-element subset of ${\cX_n}$, which we call a binary code. 
In the next lemma we find an explicit expression of the kernel $\lambda(x,y)$ defined in \eqref{eq:lambda0}. It depends on $d(x,y)$ and $n$, but we suppress $n$ from the notation throughout.
\begin{lemma}\label{lemma:lambda}
  Let $x,y\in{\cX_n}$ be two points such that $d(x,y)=w$. Then
  \begin{equation}\label{eq:lambda}
    \lambda(x,y)=\lambda(w):=2^{n-w}w\binom{w-1}{\lceil\frac w2\rceil-1}, \quad w=1,\dots,n.
  \end{equation}
Writing this in another form, we have 
   \begin{gather}\label{eq:eo}
   \lambda(2i-1)=\lambda(2i)=2^{n-2i}i\binom{2i}i, \quad 1\le i\le \lfloor n/2\rfloor\\
   \frac{\lambda(2i+1)}{2i+1}=\frac{\lambda(2i)}{2i}, \quad i\ge 1, \label{eq:monotone}
   \end{gather}
and thus $\lambda(i)$ is a monotone nondecreasing function of $i$ for all $i\ge 1.$
\end{lemma}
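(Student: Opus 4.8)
The plan is to use the symmetries of the Hamming space to reduce the claim to a single binomial sum, and then to evaluate that sum by a telescoping trick; the reformulations in \eqref{eq:eo}--\eqref{eq:monotone} and the monotonicity then fall out by elementary manipulation of central binomial coefficients.

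\emph{Reduction.} The kernel $\lambda$ of \eqref{eq:lambda0} is invariant under every isometry of $\cX$: translations and coordinate permutations preserve all the distances $d(x,u),d(y,u)$ and merely reindex the sum over $u$. Since the Hamming space is distance-transitive, $\lambda(x,y)$ therefore depends only on $w=d(x,y)$, so one may take $x=\mathbf 0$ and $y=(1^{w},0^{\,n-w})$. Splitting a vector $u\in\cX$ as $u=(u',u'')$ with $u'\in\{0,1\}^{w}$ its restriction to $\supp(y)$ and writing $\mathrm{wt}$ for Hamming weight, one has $d(\mathbf 0,u)=\mathrm{wt}(u')+\mathrm{wt}(u'')$ and $d(y,u)=(w-\mathrm{wt}(u'))+\mathrm{wt}(u'')$, so $d(\mathbf 0,u)-d(y,u)=2\,\mathrm{wt}(u')-w$ depends on $u'$ only. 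Summing over the $2^{n-w}$ choices of $u''$ gives
\[
\lambda(w)=\frac12\sum_{u'\in\{0,1\}^{w}}2^{n-w}\,\bigl|2\,\mathrm{wt}(u')-w\bigr|=2^{n-w-1}\,S(w),\qquad S(w):=\sum_{k=0}^{w}\binom wk\,|2k-w|.
\]

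\emph{Evaluating $S(w)$.} The substitution $k\mapsto w-k$ pairs the terms with $2k<w$ against those with $2k>w$ (the middle term, present only when $w$ is even, vanishes), so $S(w)=2\sum_{2k>w}(2k-w)\binom wk$. The key observation is the telescoping identity
\[
(2k-w)\binom wk=k\binom wk-(w-k)\binom wk=w\Bigl(\binom{w-1}{k-1}-\binom{w-1}{k}\Bigr).
\]
Letting $k_0=\lfloor w/2\rfloor+1$ be the smallest $k$ with $2k>w$, the sum collapses:
\[
\sum_{k=k_0}^{w}(2k-w)\binom wk=w\sum_{k=k_0}^{w}\Bigl(\binom{w-1}{k-1}-\binom{w-1}{k}\Bigr)=w\binom{w-1}{k_0-1}=w\binom{w-1}{\lfloor w/2\rfloor}.
\]
Since $\lfloor w/2\rfloor+(\lceil w/2\rceil-1)=w-1$, the last binomial equals $\binom{w-1}{\lceil w/2\rceil-1}$, so $S(w)=2w\binom{w-1}{\lceil w/2\rceil-1}$ and $\lambda(w)=2^{n-w-1}S(w)=2^{n-w}w\binom{w-1}{\lceil w/2\rceil-1}$, which is \eqref{eq:lambda}.

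\emph{Equivalent forms and monotonicity.} For \eqref{eq:eo}--\eqref{eq:monotone} I would substitute $w=2i$, $w=2i-1$, and $w=2i+1$ into \eqref{eq:lambda} and rewrite the binomial coefficients via $\binom{2i-1}{i-1}=\tfrac12\binom{2i}{i}$ and $\binom{2i-2}{i-1}=\tfrac{i}{2(2i-1)}\binom{2i}{i}$; this yields $\lambda(2i-1)=\lambda(2i)=2^{n-2i}\,i\binom{2i}{i}$ and $\lambda(2i+1)=2^{n-2i-1}(2i+1)\binom{2i}{i}$, from which \eqref{eq:monotone} is immediate. Monotonicity then follows because $\lambda$ is constant on each pair $\{2i-1,2i\}$ while $\lambda(2i+1)/\lambda(2i)=(2i+1)/(2i)>1$. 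The only non-mechanical point in the whole argument is spotting the identity $(2k-w)\binom wk=w\bigl(\binom{w-1}{k-1}-\binom{w-1}{k}\bigr)$ that makes $S(w)$ telescope; everything else is bookkeeping, including the harmless parity split of $w$ and the two interchangeable ways of writing the surviving central binomial coefficient.
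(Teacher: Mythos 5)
Your proof is correct, and while the reduction is the same as the paper's (fix $x=0$, observe that $|d(x,u)-d(y,u)|=|2j-w|$ where $j$ is the overlap of $u$ with the support of $y$, and factor out $2^{n-w}$ over the coordinates outside that support), your evaluation of the resulting sum takes a genuinely different and cleaner route. The paper splits into the cases $w$ odd and $w$ even and evaluates the half-range sums $\sum_{j\ge\lceil w/2\rceil}\binom{w-1}{j-1}$ and $\sum_{j\ge\lceil w/2\rceil}\binom wj$ separately via known partial-sum identities (e.g.\ $2^{w-2}+\tfrac12\binom{w-1}{(w-1)/2}$ for odd $w$), which forces two parallel computations. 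You instead use the identity $(2k-w)\binom wk=w\bigl(\binom{w-1}{k-1}-\binom{w-1}{k}\bigr)$, so that after the symmetry pairing $k\mapsto w-k$ the sum telescopes to the single term $w\binom{w-1}{\lfloor w/2\rfloor}=w\binom{w-1}{\lceil w/2\rceil-1}$, handling both parities uniformly and avoiding any partial binomial sums. What the paper's route buys is that it stays entirely within standard half-sum formulas; what yours buys is the elimination of the parity case split and a one-line closed form for the key sum. Your derivations of \eqref{eq:eo}, \eqref{eq:monotone} and the monotonicity (constant on pairs $\{2i-1,2i\}$, ratio $(2i+1)/(2i)>1$ across pairs) are correct and match the paper's conclusions.
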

{\em Remark:} From \eqref{eq:monotone} and $\lambda(1)=2^{n-1}$ we also obtain the following expression: for $i\ge0$
   $$
   \lambda(2i+1)=\frac{(2i+1)!!}{(2i)!!}2^{n-1},
   $$
   and thus the generating function of the numbers $\lambda(2i+1)$ is
   $$
   2^{-n+1}\sum\nolimits_{i=0}^\infty \lambda(2i+1)x^i=(1-x)^{-3/2};
   $$
see also sequence \href{http://oeis.org/A001803}{A001803} in OEIS \cite{oeis}.
\begin{proof}  Without loss of generality let $x=0.$ Let $u\in {\cX_n}$ be a point and let $j=|u\cap y|$ be the intersection (the number
of common ones) of $u$ and $y$. Let $i=|u\cap y^c|=|u|-j$ be the remaining number of ones in $u$. Then 
   $$
   \sum_{u\in{\cX_n}}|d(u,x)-d(u,y)|=\sum_{u\in{\cX_n}} |(i+j)-(w-j)-i|=\sum_{u\in{\cX_n}}|2j-w|,
   $$
where we have suppressed the dependence of $i$ and $j$ on $u.$ 

Let $w$ be odd, then 
   \begin{equation*}
   \sum_{u\in{\cX_n}}|2j-w|=\sum_{u:j\ge \lceil w/2\rceil} (2j-w)+\sum_{u:j\le \lfloor w/2\rfloor}(w-2j).
\end{equation*}
The two terms on the right are equal to each other, and thus 
  \begin{align}
  \lambda(w)=\sum_{\begin{substack}{u\in{\cX_n}\\j\ge \lceil w/2\rceil}\end{substack}} 
      (2j-w)&=\sum_{i=0}^{n-w}\sum_{j=\lceil w/2\rceil}^w(2j-w)\binom{n-w}i\binom wj\nonumber\\
     &=2^{n-w}\Big[2\sum_{j=\lceil w/2\rceil}^w j\binom wj -w\sum_{j=\lceil w/2\rceil}^w \binom wj\Big]\nonumber\\
     &=2^{n-w}\Big[2w \sum_{j=\lceil w/2\rceil}^w\binom{w-1}{j-1}-w\sum_{j=\lceil w/2\rceil}^w \binom wj\Big]\nonumber\\
     &=2^{n-w}\Big[2w\Big(2^{w-2}+\frac12\binom{w-1}{(w-1)/2}\Big)-w2^{w-1}\Big]\nonumber\\
     &=2^{n-w}w\binom{w-1}{(w-1)/2}=2^{n-w}w\binom{w-1}{\lceil\frac w2\rceil-1}.\label{eq:1st}
  \end{align}
This proves \eqref{eq:lambda} for $w$ odd.
The case of $w$ even is very similar, with only minor changes:
   \begin{align*}
   \frac 12\sum_{u\in{\cX_n}}|d(u,x)-d(u,y)|&=\frac12\Big[\sum_{\begin{substack}{u\in{\cX_n}\\j\ge \frac w2+1}\end{substack}}(2j-w)+
                 \sum_{\begin{substack}{u\in{\cX_n}\\j\ge \frac w2-1}\end{substack}}(w-2j)\Big]\\
        &=2^{n-w-1}\Big[\sum_{j=\frac w2+1}^w\binom wj(2j-w)+\sum_{j=0}^{\frac w2-1}\binom wj(w-2j)\Big]\\
        &=2^{n-w}\Big[\sum_{j=\frac w2+1}^wj\binom wj-\sum_{j=0}^{\frac w2-1}j\binom wj\Big]\\
        &=2^{n-w} w\Big[\sum_{j=\frac w2+1}^w \binom{w-1}{j-1}-\sum_{j=0}^{\frac w2-1}\binom{w-1}{j-1}\Big]\\
        &=2^{n-w}w\Big[2^{w-2}-\Big(2^{w-2}-\binom{w-1}{\frac w2-1}\Big)\Big]\\
        &=2^{n-w} w\binom {w-1}{\lceil\frac w2\rceil-1},
    \end{align*}
i.e., the same as \eqref{eq:1st}.

To prove \eqref{eq:eo}, note that $\lambda(2i)=2^{n-2i}(2i)\binom{2i-1}{i-1}$ and
   $$
   \lambda(2i-1)=2^{n-2i+1}(2i-1)\binom{2i-2}{i-1}=2^{n-2i}(2i)\binom{2i-1}{i-1}=2^{n-2i}i\binom{2i}i,
   $$
   as claimed. Finally, \eqref{eq:monotone} is computed directly from \eqref{eq:eo}.
\end{proof}

Recalling \eqref{eq:kernel}, we next aim to compute the average value $\langle \lambda \rangle_{{\cX_n}}=2^{-2n}\sum_{x,y\in{\cX_n}}\lambda(d(x,y))$.
For a fixed $x$ there are $\binom nw$ vectors $y$ such that $d(x,y)=w,$ which we can use together with the expression for $\lambda$ \eqref{eq:lambda}.
Somewhat surprisingly, the resulting sum has a closed-form expression.
Namely, for any $n\ge 1$ we have:
     \begin{align}
   2^{-n}\sum_{x,y\in{\cX_n}}&\lambda(d(x,y))=\sum_{w=0}^n \binom nw\lambda(w)=\sum_{w=1}^n 2^{n-w}w\binom nw\binom{w-1}{\lceil \frac w2\rceil-1}=\frac n2\binom{2n} n.\label{eq:closed}
     \end{align}
To prove this, let us write \eqref{eq:square} for the Hamming space:
   $$
   \sum_{t=0}^n\Big(\sum_{i=0}^t \binom ni\Big)^2 = 2^{2n-1}(n+2)-2^{-n}\sum_{x,y\in {\cX_n}}\lambda(x,y).
   $$
Now from \eqref{eq:sqsum} and \eqref{eq:lambda} we find that \eqref{eq:closed} is true for all $n$.

\vspace*{.05in}{\em Remarks:} 

1. An identity related to \eqref{eq:closed} is the following:
   $
   \sum_{w=1}^n (-1)^{w+1}\binom nw\lambda(w)=\binom{2(n-1)}{n-1}.
   $
   
2. The numbers on either side of \eqref{eq:closed} as a function of $n$ form sequence
\href{http://oeis.org/A002457}{A002457} in OEIS.

  
\vspace*{.05in}

Rephrasing \eqref{eq:closed}, we obtain the following proposition.
\begin{proposition} 
  The average value of the kernel $\lambda(x,y)$ over the entire space ${\cX_n}$ equals
   \begin{equation}\label{eq:av}
   \Lambda_n:=\frac n{2^{n+1}}\binom{2n}{n}.
   \end{equation}
\end{proposition}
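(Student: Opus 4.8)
The plan is to obtain the proposition as an immediate corollary of the identity \eqref{eq:closed} established just above. By definition, the average of the kernel over the whole space is
\[
\langle\lambda\rangle_{\cX}=\frac1{|{\cX}|^2}\sum_{x,y\in{\cX}}\lambda(x,y)=\frac1{2^{2n}}\sum_{x,y\in{\cX}}\lambda(d(x,y)),
\]
and \eqref{eq:closed} states that $2^{-n}\sum_{x,y\in{\cX}}\lambda(d(x,y))=\tfrac n2\binom{2n}n$. Dividing this by $2^n$ gives $\langle\lambda\rangle_{\cX}=\tfrac{n}{2^{n+1}}\binom{2n}n=\Lambda_n$, which is exactly \eqref{eq:av}. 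So the entire proof consists of recording this division; no new estimate is required.

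For completeness I would also recall the two routes to \eqref{eq:closed}, since that is where the actual content sits. The first is to specialize the general Stolarsky identity \eqref{eq:square} to ${\cX}=\{0,1\}^n$: because $|B(x,t)|=\sum_{i=0}^t\binom ni$ is independent of the center, \eqref{eq:square} becomes $\sum_{t=0}^n\big(\sum_{i=0}^t\binom ni\big)^2=2^{2n-1}(n+2)-2^{-n}\sum_{x,y}\lambda(x,y)$, while \eqref{eq:sqsum} evaluates the left-hand side as $2^{2n-1}(n+2)-\tfrac n2\binom{2n}n$; the two copies of $2^{2n-1}(n+2)$ cancel and \eqref{eq:closed} falls out. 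The second, purely combinatorial route groups the double sum by Hamming distance, $\sum_{x,y}\lambda(x,y)=2^n\sum_{w=0}^n\binom nw\lambda(w)$, inserts the closed form $\lambda(w)=2^{n-w}w\binom{w-1}{\lceil w/2\rceil-1}$ from Lemma~\ref{lemma:lambda}, and verifies $\sum_{w=1}^n 2^{n-w}w\binom nw\binom{w-1}{\lceil w/2\rceil-1}=\tfrac n2\binom{2n}n$ directly --- e.g.\ by pairing $w=2i-1$ with $w=2i$ via \eqref{eq:eo} and summing, or by matching the generating function of OEIS sequence A002457.

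The main (indeed the only) obstacle would be a fully self-contained proof of the binomial identity \eqref{eq:closed}; but that obstacle is entirely sidestepped by the argument through \eqref{eq:square} and \eqref{eq:sqsum}, both of which are already in hand, so the proposition costs essentially nothing beyond the bookkeeping factor $2^{-n}$.
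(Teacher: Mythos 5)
Your proof is correct and matches the paper's own argument: the paper likewise obtains the proposition as a direct rephrasing of \eqref{eq:closed}, which it proves by specializing \eqref{eq:square} to the Hamming space and invoking \eqref{eq:sqsum}, exactly as in your primary route. The only content beyond \eqref{eq:closed} is the division by $2^n$, which you record explicitly.
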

Thus, the average value $\Lambda_n\approx \frac n{2^{n+1}}2^{2n}/\sqrt{\pi n}=\sqrt{n/\pi}2^{n-1},$ and it increases roughly by a factor of 2 as the dimension $n$ increases by one. 

Using \eqref{eq:closed}, we obtain a simplified form of the invariance principle \eqref{eq:kernel} for the Hamming space. Namely, 
  \begin{align}\label{eq:SH2}
      D^{L_2}(Z)&=\Lambda_n-\frac1{N^2}
       \sum_{i,j=1}^N \lambda(d(z_i,z_j)).
         \end{align}
Let $$A_w:=\frac 1N|\{(z_1,z_2)\in Z^2|d(z_1,z_2)=w\}|$$ be the number of ordered pairs of elements in $Z$ at distance $w$. The set of numbers
$A(Z)=\{A_0=1,A_1,\dots,A_n\}$ is called the distance distribution of the code $Z.$ Using this concept, we can write the expression
for discrepancy in final form.
\begin{theorem}[{\sc Stolarsky's invariance for the Hamming space}]\label{thm:SHF}
Let $Z\subset \{0,1\}^n$ be a subset of size $N$ with distance distribution $A(Z).$ Then
   \begin{align}\label{eq:SH3}
   D^{L_2}(Z)&=\Lambda_n-\frac{1}{N}\sum_{w=1}^n A_w\lambda(w)
   \end{align}
\end{theorem}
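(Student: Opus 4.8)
The statement follows directly from the simplified invariance principle \eqref{eq:SH2} together with the definition of the distance distribution $A(Z)$, so the plan is essentially a bookkeeping exercise. First I would start from \eqref{eq:SH2}, namely $D^{L_2}(Z)=\Lambda_n-\tfrac1{N^2}\sum_{i,j=1}^N\lambda(d(z_i,z_j))$, which was already established using the closed-form evaluation \eqref{eq:closed} of $\langle\lambda\rangle_{\cX}$ and the expression \eqref{eq:av} for $\Lambda_n$. The only thing that remains is to reorganize the double sum over ordered pairs $(z_i,z_j)\in Z^2$ by grouping the pairs according to their common Hamming distance.

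The key step is the rewriting
  \[
  \sum_{i,j=1}^N\lambda(d(z_i,z_j))=\sum_{w=0}^n\lambda(w)\,\big|\{(z_i,z_j)\in Z^2: d(z_i,z_j)=w\}\big|=\sum_{w=0}^n \lambda(w)\,N A_w,
  \]
where in the last equality I use the definition $A_w=\tfrac1N|\{(z_1,z_2)\in Z^2: d(z_1,z_2)=w\}|$. The $w=0$ term contributes $\lambda(0)\,N A_0=0$ since $\lambda(0)=0$ (from \eqref{eq:lambda0}, as $|d(x,u)-d(x,u)|=0$, or equivalently because the formula \eqref{eq:lambda} is stated for $w\ge1$ and there are $A_0=1$ diagonal pairs per point contributing nothing). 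Hence $\tfrac1{N^2}\sum_{i,j=1}^N\lambda(d(z_i,z_j))=\tfrac1N\sum_{w=1}^n A_w\lambda(w)$, and substituting into \eqref{eq:SH2} yields \eqref{eq:SH3}.

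There is no real obstacle here; the substantive work — the closed-form summation \eqref{eq:closed} and the explicit evaluation of $\lambda(w)$ in Lemma \ref{lemma:lambda} — has already been carried out. The only point requiring a word of care is the treatment of the diagonal terms $i=j$ (equivalently the $w=0$ term), which vanish because $\lambda$ evaluated at distance zero is zero; once this is noted, the change of summation index from pairs to distances is immediate and the proof is complete.
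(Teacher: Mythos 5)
Your proposal is correct and matches the paper's route: the paper obtains \eqref{eq:SH3} exactly by combining \eqref{eq:SH2} (itself a consequence of \eqref{eq:kernel} and the closed-form sum \eqref{eq:closed}) with the definition of the distance distribution, the regrouping of the double sum being regarded as immediate. Your explicit note that the diagonal ($w=0$) terms vanish because $\lambda(0)=0$ is the only detail the paper leaves unstated, and it is handled correctly.
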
   

Estimating the central binomial coefficient, we can approximate $\lambda(w)\approx C 2^n \sqrt{w}$ for some constant $C<1,$ and thus
   $$
   D^{L_2}(Z)\approx \Lambda_n - C \frac{2^n}{N} \sum_{w=1}^n A_w\sqrt{w}.
   $$
A topic that we discuss in more detail below is finding codes $Z$ that have the smallest possible discrepancy among all codes of their cardinality.
An obvious observation from \eqref{eq:SH3} is that among all subsets $\{x,y\}$ of size 2 the smallest discrepancy is attained when $d(x,y)=n.$ 
Indeed, we have $D^{L_2}(Z)\ge \Lambda_n-(1/2)\lambda(d(x,y)),$ and the claim follows since $\lambda(i)$ is monotone increasing as a function of $i$ \eqref{eq:monotone}. 

Using \eqref{eq:SH3mu}, we can express $D^{L_2}(Z)$ in an equivalent form. Namely, from \eqref{eq:cap2} 
the average value of the kernel $\mu$ over ${\cX_n}$ equals 
  $$
  \langle\mu\rangle_{{\cX_n}}=2^{n-1}(n+2)-\Lambda_n,
  $$
and thus
 \begin{align*}
      D^{L_2}(Z)=\frac1{N}\sum_{w=1}^n A_w\mu(w)+\Lambda_n-2^{n-1}(n+2).\nonumber
  \end{align*}

\subsubsection*{{\sc Random codes}} To get a feeling of the possible values of $D^{L_2}(Z),$ 
let us compute the expected discrepancy over the set of random codes of size $N$ chosen in ${\cX_n}$ with uniform 
distribution. Let $X$ be a random vector such that $P(X=x)=2^{-n}$ for every $x\in {\cX_n}$ and let $\cZ=\{z_1,\dots,z_N\}$ be a subset 
formed of $N$ independent copies of $X.$ Equivalently, one can choose each $z_i$ by uniformly and independently selecting the values of
each of the $n$ coordinates. 

For a given $w, 1\le w\le n$ and $z_i\in \cZ$ the probability $\Pr(d(z_i,z_j)=w)=\binom nw 2^{-n},$ and
thus the expected number of pairs
  \begin{equation}\label{eq:E}
  {\sf E}|\{(z_i,z_j): d(z_i,z_j)=w\}|=N\,{\sf E}[A_w(\cZ)]=N(N-1)\binom nw 2^{-n}.
  \end{equation}
Now using \eqref{eq:closed}, we obtain that
   $$
   {\sf E}[\langle\lambda\rangle_{\cZ}]=\frac{N-1}{N}\frac{n}{2^{n+1}}\binom{2n}n.
   $$
Together with \eqref{eq:SH3} we conclude as follows.
\begin{proposition}\label{prop:random} The expected discrepancy of a random code of size $N$ in $\{0,1\}^n$ equals
   \begin{gather}
   {\sf E}[D^{L_2}(\cZ)]=\frac{n}{N2^{n+1}}\binom {2n} n=\sqrt{\frac{n}{\pi}}\frac{2^{n-1}}{N}(1-\alpha_n n^{-1}),\label{eq:ED}
   \end{gather}
where $\alpha_n<1$ is a constant.
\end{proposition}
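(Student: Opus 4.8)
The plan is to combine the exact formula \eqref{eq:SH3} with the expected distance distribution of a random code, and then to extract the asymptotics from the known behavior of the central binomial coefficient. First I would invoke Theorem~\ref{thm:SHF}: since $D^{L_2}$ is linear in the numbers $A_w$, taking expectations gives ${\sf E}[D^{L_2}(\cZ)]=\Lambda_n-\frac1N\sum_{w=1}^n{\sf E}[A_w(\cZ)]\lambda(w)$. By \eqref{eq:E} we have ${\sf E}[A_w(\cZ)]=(N-1)\binom nw 2^{-n}$, so the sum becomes $\frac{N-1}{N}\cdot 2^{-n}\sum_{w=1}^n\binom nw\lambda(w)$, which is precisely the left side of the closed-form identity \eqref{eq:closed}, equal to $\frac n2\binom{2n}n$. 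Substituting and using $\Lambda_n=\frac n{2^{n+1}}\binom{2n}n$ from \eqref{eq:av}, the two terms telescope to ${\sf E}[D^{L_2}(\cZ)]=\Lambda_n\bigl(1-\tfrac{N-1}N\bigr)=\frac1N\Lambda_n=\frac n{N2^{n+1}}\binom{2n}n$, which is the first equality in \eqref{eq:ED}.

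For the second equality I would apply the standard asymptotic expansion of the central binomial coefficient. From Stirling's formula one has $\binom{2n}n=\frac{4^n}{\sqrt{\pi n}}\bigl(1-\tfrac1{8n}+O(n^{-2})\bigr)$, hence $\frac n{2^{n+1}}\binom{2n}n=\frac n{2^{n+1}}\cdot\frac{2^{2n}}{\sqrt{\pi n}}\bigl(1-\tfrac1{8n}+O(n^{-2})\bigr)=\sqrt{\tfrac n\pi}\,2^{n-1}\bigl(1-\tfrac1{8n}+O(n^{-2})\bigr)$. Dividing by $N$ gives exactly the stated form $\sqrt{n/\pi}\,\frac{2^{n-1}}{N}(1-\alpha_n n^{-1})$, where $\alpha_n=\tfrac18+O(n^{-1})$; in particular $\alpha_n<1$ for all $n\ge1$ (one can verify the small cases $n=1,2$ by hand against the exact value $\frac n{2^{n+1}}\binom{2n}n$, since there the two-term expansion is not yet obviously dominant). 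It is worth noting explicitly that the ratio of the exact to the leading term is $\frac{\sqrt{\pi n}}{2^{2n}}\binom{2n}n$, a quantity known to lie in $(0,1)$ and increasing to $1$, which immediately yields $0<\alpha_n n^{-1}<1$ and hence $\alpha_n<n$; the sharper bound $\alpha_n<1$ then follows from the monotonicity and the first correction term being $1/8$.

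The main obstacle here is essentially bookkeeping rather than conceptual: one must be careful that the factor $N-1$ (rather than $N$) in ${\sf E}[A_w(\cZ)]$ is what produces the clean $1/N$ scaling after cancellation against $\Lambda_n$, and one must state the constant $\alpha_n$ precisely enough to justify $\alpha_n<1$ uniformly in $n$ rather than only asymptotically. I would handle the latter by either citing a standard two-sided bound such as $\frac{4^n}{\sqrt{\pi(n+1/2)}}\le\binom{2n}n\le\frac{4^n}{\sqrt{\pi n}}$, which gives $1-\alpha_n n^{-1}=\frac{\sqrt{\pi n}}{4^n}\binom{2n}n\in\bigl[\sqrt{\tfrac n{n+1/2}},1\bigr)$ and therefore $0<\alpha_n\le\tfrac12\cdot\tfrac{n}{n+1/2}<1$, settling the claim with no appeal to asymptotics at all. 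This bound-based route is cleaner, so I would present that as the proof and relegate the $\alpha_n\to\tfrac18$ refinement to a remark.
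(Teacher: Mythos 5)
Your proof is correct and follows essentially the same route as the paper: take expectations of the distance distribution in \eqref{eq:SH3} via \eqref{eq:E}, invoke the closed form \eqref{eq:closed} so that the terms telescope to $\Lambda_n/N$, and then control the central binomial coefficient. The only difference is that you make explicit the two-sided bound $\frac{4^n}{\sqrt{\pi(n+1/2)}}\le\binom{2n}n\le\frac{4^n}{\sqrt{\pi n}}$ to justify $\alpha_n<1$ uniformly, which the paper dispatches with the remark that it used standard inequalities for the central binomial coefficient.
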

\noindent To obtain the approximation in \eqref{eq:ED} we used standard inequalities for the central binomial coefficient (see \eqref{eq:center} below).

It is also easy to estimate the moments of $D^{L_2}(\cZ).$ For instance
   $$
   \text{Var}(A_w(\cZ))=N(N-1)\Big(\binom nw2^{-n}-\binom {n}{w} ^2 2^{-2n}\Big)\le {\sf E}A_w(\cZ).
   $$
Therefore, using \eqref{eq:closed}, \eqref{eq:E} and independence or pairwise distances, we obtain
  \begin{align*}
  \text{Var}(D^{L_2}(\cZ))=\frac{1}{N^2}\sum_{w=1}^n \lambda(w)\text{Var}(A_w(\cZ))\le \frac{N-1}{N}\frac{n}{2^{n+1}}\binom {2n}{n}
     - {\sf E}[D^{L_2}(\cZ)]
  \end{align*}

Similar results can be obtained if we limit ourselves to random linear subspaces of ${\cX_n}$ of a given dimension $k, 1\le k\le n.$ In this case, $N=2^k$ and
    $$
    {\sf E}A_w=\frac{2^k-1}{2^n-1}\binom nw,
    $$
which is essentially the same as \eqref{eq:E}.

Concluding, we have shown that the expected discrepancy is inverse proportional to the relative size of the random subset $\cZ$ in ${\cX_n}$ irrespective of whether $\cZ$ is a linear subspace or a fully random subset of ${\cX_n}.$

For the spherical case, expected discrepancy of a random configuration of size $N$ was computed in \cite{BilykLacey2017}, which showed that it is proportional to the quotient of the average distance on the sphere and $N$. In our case, ${\sf E}[\langle\lambda\rangle_{\cZ}]=\Lambda_n/N,$
the quotient of the average value of $\lambda(x,y)$ and $N$.

\subsubsection*{\sc Extending a code} Let $Z\subset {\cX_n}$ be a code. For every vector $z=(z_1,\dots,z_n)\in Z$ find $z_{n+1}=\oplus_{i=1}^n z_i$ and 
adjoin this coordinate to $z$, forming a vector $z'=(z|z_{n+1}).$ The set of vectors $\{z'| z\in Z\}$ forms an extended code $Z^{\text{ex}}$ of length $n+1$. The discrepancy $D^{L_2}(Z^{\text{\rm ex}})$ can be easily found from $D^{L_2}(Z).$
\begin{proposition}  Let $Z$ be a linear code of length $n=2p-1$ and size $N,$ then
    \begin{equation}\label{eq:extended}
    D^{L_2}(Z^{\text{\rm ex}})=2D^{L_2}(Z)+\frac1{2^{n+1}}\binom {2n}{n}.
    \end{equation}
  \end{proposition}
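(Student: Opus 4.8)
The plan is to apply Theorem~\ref{thm:SHF} twice — once in length $n+1$ to $Z^{\mathrm{ex}}$ and once in length $n$ to $Z$ — and to track how the distance distribution and the kernel $\lambda$ transform under the parity extension. Throughout I write $\lambda_m(w)=2^{m-w}w\binom{w-1}{\lceil w/2\rceil-1}$ for the kernel of Lemma~\ref{lemma:lambda} in $\{0,1\}^m$, so that the formula of that lemma is $\lambda(w)=\lambda_n(w)$.

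First I would record how the extension acts. The map $z\mapsto z'$ is a bijection of $Z$ onto $Z^{\mathrm{ex}}$, so $|Z^{\mathrm{ex}}|=N$; and for $z_1,z_2\in Z$ the appended coordinates of $z_1',z_2'$ differ exactly when $d(z_1,z_2)$ is odd, so $d(z_1',z_2')$ equals $d(z_1,z_2)$ when that distance is even and $d(z_1,z_2)+1$ when it is odd. Hence $Z^{\mathrm{ex}}$ has only even pairwise distances and its distance distribution satisfies $A_{2i}(Z^{\mathrm{ex}})=A_{2i}(Z)+A_{2i-1}(Z)$ for $0\le i\le p$, with the convention $A_{n+1}(Z):=0$ since $Z$ has length $n=2p-1$. (Linearity is not truly essential here; it only lets one phrase this via the common value of the weight and distance distributions, but the distance argument above is what is used.) Next I would note two elementary facts about the kernel: \textbf{(i)} $\lambda_{n+1}(w)=2\lambda_n(w)$ for $1\le w\le n$, immediate from the closed form; and \textbf{(ii)} $\lambda_{n+1}(2i-1)=\lambda_{n+1}(2i)$ for $1\le i\le p$, which is just \eqref{eq:eo} in length $n+1$, and which in particular gives $\lambda_{n+1}(n+1)=\lambda_{n+1}(2p)=\lambda_{n+1}(2p-1)=\lambda_{n+1}(n)=2\lambda_n(n)$.

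Then the computation is essentially forced. Applying Theorem~\ref{thm:SHF} in length $n+1$ and inserting the distance distribution from the previous step gives $D^{L_2}(Z^{\mathrm{ex}})=\Lambda_{n+1}-\tfrac1N\sum_{i=1}^{p}\bigl(A_{2i}(Z)+A_{2i-1}(Z)\bigr)\lambda_{n+1}(2i)$. Using fact (ii) to rewrite the terms multiplying $A_{2i-1}(Z)$ with argument $2i-1$ and then recombining even and odd indices collapses the sum to $\sum_{w=1}^{2p}A_w(Z)\lambda_{n+1}(w)$; since $A_{2p}(Z)=0$ and, by fact (i), $\lambda_{n+1}(w)=2\lambda_n(w)$ for $w\le n$, this equals $2\sum_{w=1}^{n}A_w(Z)\lambda_n(w)$. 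By Theorem~\ref{thm:SHF} applied to $Z$ itself, $\tfrac1N\sum_{w=1}^n A_w(Z)\lambda_n(w)=\Lambda_n-D^{L_2}(Z)$, so $D^{L_2}(Z^{\mathrm{ex}})=\Lambda_{n+1}-2\bigl(\Lambda_n-D^{L_2}(Z)\bigr)=2D^{L_2}(Z)+\bigl(\Lambda_{n+1}-2\Lambda_n\bigr)$. It remains to evaluate the constant: from \eqref{eq:av} and $\binom{2n+2}{n+1}=\tfrac{2(2n+1)}{n+1}\binom{2n}{n}$ one gets $\Lambda_{n+1}=\tfrac{2n+1}{2^{n+1}}\binom{2n}{n}$ and $2\Lambda_n=\tfrac{2n}{2^{n+1}}\binom{2n}{n}$, hence $\Lambda_{n+1}-2\Lambda_n=\tfrac1{2^{n+1}}\binom{2n}{n}$, which is exactly \eqref{eq:extended}.

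The one place that needs care — the main obstacle, though a mild one — is the boundary bookkeeping: one must check that the parity pairing of $\lambda_{n+1}$ in fact (ii) cleanly absorbs the codewords of $Z$ of odd weight $n=2p-1$, which migrate to weight $n+1$ in $Z^{\mathrm{ex}}$, and that since $Z$ has length $n$ there are no weight-$(n+1)$ codewords to begin with, so the index ranges of the two sums match. Everything else — the two invocations of Theorem~\ref{thm:SHF} and the binomial-coefficient arithmetic for $\Lambda_{n+1}-2\Lambda_n$ — is routine.
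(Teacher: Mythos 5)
Your proof is correct and follows essentially the same route as the paper: both apply Theorem~\ref{thm:SHF} in lengths $n$ and $n+1$, track how the distance distribution merges under the parity extension, use the doubling of $\lambda$ together with the pairing \eqref{eq:eo}, and evaluate $\Lambda_{n+1}-2\Lambda_n=\frac{1}{2^{n+1}}\binom{2n}{n}$. Your explicit handling of the boundary term $\lambda_{n+1}(n+1)=\lambda_{n+1}(n)$ (and the remark that linearity is inessential) is a slightly cleaner bookkeeping of what the paper does with its formal convention $A_{2p}=0$, but it is the same argument.
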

\begin{proof}
Let $A(Z)=\{1,A_1,\dots,A_n\}$ be the distance distribution of $Z$, then the distance distribution of the extended code is
$\{1,A'_i,i=1,\dots,2p\},$ where $A'_{2j-1}=0, A'_{2j}=A_{2j-1}+A_{2j}, j=1,\dots,p-1,$ and $A'_{2p}=A_{2p-1}.$ From \eqref{eq:SH3} and \eqref{eq:eo}
   $$
   D^{L_2}(Z)=\frac{n}{2^{n+1}}\binom {2n}{n}-\frac{1}{N}\sum_{i=1}^{p+1}(A_{2i-1}+A_{2i})\lambda(2i),
   $$
   where we formally put $A_{2p}=0.$
When the code is extended, the length increases by one, and the value of $\lambda(w)$ doubles. We obtain
   \begin{align}
   D^{L_2}(Z^{\text{ex}})&=\frac{n+1}{2^{n+2}}\binom{2n+2}{n+1}-\frac1N \sum_{j=1}^{p+1} 2A'_{2j}\lambda(2j)\nonumber\\
   &=\frac{2n+1}n\Lambda_n-\frac2N\sum_{i=1}^{p+1}(A_{2i-1}+A_{2i})\lambda(2i)\\
   &=2D^{L_2}(Z)+\frac 1n\Lambda_n.
   \end{align}
Upon substituting \eqref{eq:av},we obtain \eqref{eq:extended}.
\end{proof}

\subsection{Krawtchouk polynomials} 
Krawtchouk polynomials form a family of discrete orthogonal polynomials on $\{0,1,\dots,n\}$ with respect to the weight $\binom ni 2^{-n}.$
A Krawtchouk polynomial of degree $k$ is defined as 
   $$
   K_k^{(n)}(x)=\binom nk{}_{2}\hspace*{0pt}F_{1}(-k,-x;-n;2)
   $$
    \cite[p.183]{Ismail2005}, \cite[p.237]{KLS2010}. We note 
that our definition differs from the standard one by a factor $\binom nk,$ which gives $K_k^{(n)}(0)=\binom nk$ (the standard normalization gives
the value 1 at $x=0$). In this section we list properties of the Krawtchouk polynomials used below in our derivations.

The explicit expression for the polynomial of degree $k=0,1,\dots,n$ is as follows:
  \begin{equation}\label{eq:Kk}
  K_k^{(n)}(x)=\sum_{i=0}^k(-1)^i\binom xi \binom{n-x}{k-i}.
  \end{equation}
The orthogonality relations have the form
  \begin{gather}
  \langle K_i^{(n)},K_j^{(n)}\rangle:=\sum_{l=0}^n \binom nl K_i^{(n)}(l)K_j^{(n)}(l)=2^n\binom ni\delta_{ij} \label{eq:ort}
  \end{gather} 
and thus $\|K_k^{(n)}\|^2=\binom nk.$    

From \eqref{eq:Kk} it is easily seen that
  \begin{gather}\label{eq:n/2}
  K_k^{(n)}(x)=(-1)^kK_k^{(n)}(n-x)
  \end{gather}
The generating function of the numbers $K_k^{(n)}(x)$ for integer $x$ has the form
   \begin{equation}\label{eq:gf}
   \sum_{k=0}^n K_k^{(n)}(x)z^k=(1+z)^{n-x}(1-z)^x.
   \end{equation}
Simple rearranging of the binomial coefficients in \eqref{eq:Kk} yields the following symmetry relation:
  \begin{equation}\label{eq:symmetry}
  \binom ni K_k^{(n)}(i)=\binom nk K_i^{(n)}(k).
  \end{equation}
  Rewriting \eqref{eq:ort} with the help of \eqref{eq:symmetry}, we obtain the expansion of the function $\delta_i(w):=\bb(w=i), i=0,1,\dots,n$ into the basis $(K_k)$:
  \begin{equation}\label{eq:deltawi}
  \delta_i(w)=2^{-n}\sum_{k=0}^n K_i^{(n)}(k) K_k^{(n)}(w), \quad w=0,1,\dots,n.
  \end{equation}
The Krawtchouk polynomials satisfy the following Rodrigues-type formula
   \begin{equation}\label{eq:Rod}
   \binom nx K_k^{(n)}(x)=\binom nk \nabla^k \Big[\binom{n-k}{x}\Big],
   \end{equation}
where $\nabla f(x):=f(x)-f(x-1)$ is the finite difference operator, \cite{KLS2010}, Eq.(9.11.10).

The following relation was proved in \cite{FF1996}, Thm.~3.1.3:
  \begin{equation}\label{eq:SoS}
  \sum_{k=0}^n (K_k^{(n)}(i))^2=\binom{2n-2i}{n-i}\binom{2i}i/\binom ni.
  \end{equation}
A self-contained proof relying only on \eqref{eq:gf} is given in the Appendix. The proof in \cite{FF1996} is more involved and relies on different methods.
     
The next lemma is a particular case of a general result in the theory of spherical harmonics. We give a short proof for completeness.
\begin{lemma} Let $x,y\subset \cX_n$ be such that $d(x,y)=w.$ Then the convolution $K_k^{(n)}\ast K_m^{(n)}$ defines a radial 
kernel on $\cX_n$ according to the following identity:
     \begin{equation}\label{eq:convolution}
    \sum_{z\in\cX_n} K_k^{(n)}(d(x,z))K_m^{(n)}(d(z,y))=2^n K_k^{(n)}(w)\delta_{k,m}.
     \end{equation}
\end{lemma}
   \begin{proof} Let $u_1,u_2\in\cX_n.$ As is easily seen, $K_k(d(u_1,u_2))=\sum_{v\in\cX_n: |v|=k}(-1)^{(v,u_1+u_2)},$ where $(v,u)=\sum_{i=1}^n v_i u_i,$ the sum is evaluated mod 2, and where $|\!\cdot\!|$ denotes the Hamming weight. Thus,
   \begin{align*}
   \sum_{z\in \cX_n}K_k^{(n)}(d(x,z))K_m^{(n)}(d(z,y))&=\sum_{z\in\cX_n}\sum_{|u|=k}(-1)^{(u,x+z)}\sum_{|v|=m}(-1)^{(v,z+y)}\\
   &=\sum_{|u|=k}\sum_{|v|=m}(-1)^{(u,x)+(v,y)}\sum_{z\in\cX_n}(-1)^{(u+v,z)}\\
   &=2^n\sum_{|u|=k}(-1)^{(u,x+y)}\delta_{k,m}=2^n K_k^{(n)}(d(x,y))\delta_{k,m}. \qedhere
   \end{align*}
    \end{proof}

\subsection{Dual view of discrepancy} To a code $Z\subset {\cX_n}$ one associates a pair of distance distribution vectors, $A(Z)$ defined above, and a dual
distribution $A^\bot(Z)=(A_0^\bot,\dots,A_n^\bot).$ The vectors $A(Z)$ and $A^\bot(Z)$ are connected by the following {\em MacWilliams identities}:
    \begin{gather}\label{eq:MW1}
    A_w^\bot=\frac 1{N}\sum_{i=0}^n K_w^{(n)}(i) A_i, \quad w=0,1,\dots,n\\
    A_i=\frac{N}{2^n} \sum_{w=0}^n K_i^{(n)}(w)A_w^\bot,\quad i=0,1,\dots,n.\label{eq:MW2}
  \end{gather}
In the context of spherical sets, i.e., subsets of $S^{d}(\reals),$ the quantities analogous to $A^\bot_w$ are called {\em moments} of the code \cite{BDHDSS2019}, \cite[p.~200]{BHS2019}. In the particular case that $Z\in {\cX_n}$ is a linear $k$-dimensional subspace, the dual code of $Z$ 
is defined as $Z^\bot=\{x\in {\cX_n}\mid (x,z)=0 \text{ for all } z\in Z\}$ where $(\cdot,\cdot)$ is the inner product modulo 2. 
Then the dual distance distribution of $Z$ equals the distance distribution of $Z^\bot,$ i.e., $A^\bot(Z)=A(Z^\bot)$
 \cite[Eq.(5.13)]{mac91}. 
 
Expressing the discrepancy of the code $Z$ via the dual distance distribution may simplify the computation because it is often the case 
that either $Z$ or $Z^\bot$ has only a small number of distances. Substituting \eqref{eq:MW2} into \eqref{eq:SH3}, we obtain the relation
  \begin{align}
    \label{eq:DD}
    D^{L_2}(Z)&=\Lambda_n -\frac 1{2^n}\sum_{i=0}^n A_i^\bot\sum_{w=0}^n K_w^{(n)}(i)\lambda(w)\\
    &=-\frac 1{2^n}\sum_{i=1}^n A_i^\bot\sum_{w=0}^n K_w^{(n)}(i) \lambda(w), \label{eq:not0}
  \end{align}
where \eqref{eq:not0} follows from \eqref{eq:closed}, \eqref{eq:av}, and the fact that $A_0^\bot=1.$ Expression \eqref{eq:not0} may be preferable over
\eqref{eq:SH3}, \eqref{eq:DD} because these formulas involve subtraction of two large numbers, while \eqref{eq:not0} gives an explicit form of their
difference. Note that relations \eqref{eq:DD} and \eqref{eq:not0} are valid for linear as well as unrestricted subsets $Z$.

The sum on $w$ in \eqref{eq:not0} can be written in a different form.
Namely, a calculation involving the generating function \eqref{eq:gf} shows that the following identities 
   \begin{align}\label{eq:conj}
\sum_{w=0}^n K_w^{(n)}(i)2^{n-w} w\binom{w-1}{\lceil\frac w2\rceil-1}=(-1)^i\sum_{w=0}^{n-1} K_w^{(n-1)}(2i-2)\binom{n-1}w
   \end{align}
hold true for all $n\ge 1, 1\le i\le \frac12(n+1).$ 
   
\vspace*{.05in} {\em Remarks:}  1. Since $\lambda(2j-1)=\lambda(2j),$ we can combine the consecutive Krawtchouk numbers in \eqref{eq:not0} using a standard 
relation $K_{2j-1}^{(n)}(i)+K_{2j}^{(n)}(i)=K_{2j}^{(n+1)}(i),$ however, this does not seem to lead to further simplifications.

2. As a side observation, we note another possible interpretation of the numbers on the left (or on the right) in \eqref{eq:conj}. Denote them by $a_i(n),n=1,2,\dots.$  Apparently, the coefficients of the power series expansion
    $$
  (1-4x)^{(2i-3)/2}=1+\sum_{n\ge 1} c_nx^n 
  $$ 
are given by $c_n=(-1)^i a_i(n+1)$ for all $n\ge i.$ Observe that the sequence $(a_i(n), n\ge 1)$ for different values of $i$ is related to sequences \href{http://oeis.org/A002420}{A002420}-\href{http://oeis.org/A002424}{A002424} in OEIS \cite{oeis}. 
   
\vspace*{.05in} Relations \eqref{eq:not0}, \eqref{eq:conj} sometimes enable one to compute the discrepancy of the code $Z$ in closed form. For instance, 
let $n=2^m-1$ and suppose that $Z$ is the Hamming code. Its size is $N=2^{n-m}$ and $A_i^\bot=2^m-1$ if $i=2^{m-1}=(n+1)/2$ and
$A_i^\bot=0$ for all other positive $i$ (\cite{mac91}, \S1.9). We obtain the following statement. 
\begin{theorem}\label{thm:Ham} The quadratic discrepancy of the Hamming code $Z=\cH_m$ of length $n=2^m-1,m\ge 2$ equals
   \begin{equation}\label{eq:DH}
   D^{L_2}(\cH_m)=\frac n{2^{n}}\binom{n-1}{\frac{n-1}2}.
   \end{equation}
   For large $n$ the discrepancy $D^{L_2}(\cH_m)= \sqrt {n/4\pi}(1-o(1)).$
\end{theorem}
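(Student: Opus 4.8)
The plan is to feed the known dual distance distribution of the Hamming code into the dual form \eqref{eq:not0} of the invariance principle, and then collapse the resulting Krawtchouk sum by means of the identity \eqref{eq:conj}. Since $\cH_m$ is a linear code of length $n=2^m-1$ whose dual distance distribution satisfies $A^\bot_i=2^m-1=n$ for $i=(n+1)/2$ and $A^\bot_i=0$ for every other $i\ge 1$, formula \eqref{eq:not0} reduces at once to
\[
D^{L_2}(\cH_m)=-\frac{n}{2^n}\,S,\qquad S:=\sum_{w=0}^n K_w^{(n)}\Bigl(\tfrac{n+1}{2}\Bigr)\lambda(w),
\]
so the entire task is to evaluate the single sum $S$ in closed form.

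To do this I would apply \eqref{eq:conj} at $i=(n+1)/2$, which is the boundary value $i=\frac12(n+1)$ still allowed by the hypothesis $1\le i\le\frac12(n+1)$. With this choice $2i-2=n-1$, so the right-hand side of \eqref{eq:conj} is $(-1)^{(n+1)/2}\sum_{w=0}^{n-1}K_w^{(n-1)}(n-1)\binom{n-1}{w}$. The reflection rule \eqref{eq:n/2} gives $K_w^{(n-1)}(n-1)=(-1)^wK_w^{(n-1)}(0)=(-1)^w\binom{n-1}{w}$, so the sum becomes $(-1)^{(n+1)/2}\sum_{w=0}^{n-1}(-1)^w\binom{n-1}{w}^2$. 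Now $n=2^m-1$ is odd, hence $n-1$ is even, and the alternating sum of squared binomial coefficients equals $(-1)^{(n-1)/2}\binom{n-1}{(n-1)/2}$ (it is the coefficient of $z^{n-1}$ in $(1-z^2)^{n-1}$). Multiplying the two signs, $(-1)^{(n+1)/2}(-1)^{(n-1)/2}=(-1)^n=-1$, so $S=-\binom{n-1}{(n-1)/2}$, and substituting back yields $D^{L_2}(\cH_m)=\frac{n}{2^n}\binom{n-1}{(n-1)/2}$, which is \eqref{eq:DH}. The asymptotic formula then follows from the standard estimate $\binom{2k}{k}\sim 4^k/\sqrt{\pi k}$ taken with $2k=n-1$.

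I expect no serious obstacle here, because the hard analytic step — proving the generating-function identity \eqref{eq:conj} — is already available; what remains is a clean specialization together with two elementary facts (Krawtchouk reflection and the classical alternating sum of squared binomials). The one place demanding genuine care is the sign bookkeeping: one must track the parities of $(n+1)/2$, $(n-1)/2$ and $n$ consistently, and verify that \eqref{eq:conj} is being invoked precisely at the extreme admissible value of $i$. As an alternative that sidesteps \eqref{eq:conj}, one could evaluate $S$ from scratch by substituting \eqref{eq:lambda} for $\lambda(w)$ and summing against the Krawtchouk generating function \eqref{eq:gf}, but routing through \eqref{eq:conj} is the most economical.
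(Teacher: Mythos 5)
Your derivation of \eqref{eq:DH} is correct and follows essentially the same route as the paper: substitute the single nonzero dual weight $A^\bot_{(n+1)/2}=n$ into \eqref{eq:not0}, collapse the Krawtchouk sum via \eqref{eq:conj} at the extreme value $i=(n+1)/2$, evaluate $K_w^{(n-1)}(n-1)=(-1)^w\binom{n-1}{w}$, and finish with the alternating sum of squared binomial coefficients; your sign bookkeeping via $(-1)^{(n+1)/2}(-1)^{(n-1)/2}=(-1)^n=-1$ is, if anything, slightly cleaner than the paper's, which silently uses that $(n+1)/2=2^{m-1}$ is even and then separately that $(n-1)/2$ is odd.

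One caution on the asymptotic claim: carrying out your suggested estimate $\binom{2k}{k}\sim 4^k/\sqrt{\pi k}$ with $2k=n-1$ gives
\begin{equation*}
D^{L_2}(\cH_m)\sim \frac{n}{2\sqrt{\pi(n-1)/2}}=\sqrt{\frac{n}{2\pi}}\,(1+o(1)),
\end{equation*}
not $\sqrt{n/(4\pi)}$ as written in the theorem statement; the paper's own table (e.g.\ $D^{L_2}\approx 12.763$ at $n=1023$, versus $\sqrt{1023/(2\pi)}\approx 12.76$) agrees with the constant $\sqrt{n/(2\pi)}$. So you should not assert that the stated form ``follows'' from that estimate without comment: either note that the correct constant is $\sqrt{n/(2\pi)}$, or flag the factor-$\sqrt{2}$ discrepancy in the statement.
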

\begin{proof} The right-hand side of \eqref{eq:not0} contains a single nonzero term for $i=2^{m-1}=(n+1)/2.$ Substituting \eqref{eq:conj} in \eqref{eq:not0},
we obtain
   \begin{align*}
   D^{L_2}(\cH_m)&=-\frac n{2^n}\sum_{w=0}^{n}K_w^{(n)}((n+1)/2)\lambda(w)\\
   &=-\frac n{2^n}\sum_{w=0}^{n-1}K_w^{(n-1)}(n-1)\binom{n-1}{w}.
   \end{align*}
From \eqref{eq:Kk} we observe that $K_k^{(n)}(n)=(-1)^k \binom nk,$ so we obtain
  \begin{align*}
D^{L_2}(\cH_m)& =-\frac n{2^n}\sum_{w=0}^{n-1}(-1)^w\binom{n-1}{w}^2,
  \end{align*}
which turns into \eqref{eq:DH}   
upon engaging the identity $\sum_{i=0}^p (-1)^i\binom{p}{i}^2=(-1)^{p/2}\binom p{p/2}$ valid for even $p,$ and
noticing that $(-1)^{(n-1)/2}=-1.$
\end{proof}
Another proof of this theorem is given below after we develop a Fourier transform view of discrepancy.
Note that for the parameters of the Hamming code we find $|{\cX_n}|/N\approx n,$ and thus, $D^{L_2}(\cH_m)\approx \frac 1n{\sf E}[D^{L_2}(N)] $,
where ${\sf E}[D^{L_2}(N)]$ is the expected discrepancy given in \eqref{eq:ED}. 

For the dual code $\cH_m^\bot$ (the Hadamard, or simplex code) the discrepancy is found 
immediately from \eqref{eq:SH3} and the distance distribution given before the theorem. We obtain
  \begin{equation}\label{eq:simplex}
  D^{L_2}(\cH_m^\bot)=\Lambda_n-\frac{n}{N}\lambda((n+1)/2).
  \end{equation}

\vspace*{.1in}Let us give some numerical examples. 

\vspace*{.05in}
\begin{center}{{\small\sc Discrepancy of the Hamming codes and their duals\\[.05in]
\begin{tabular}{|l|ccccccc|}
\hline
\multicolumn{8}{|c|}{Hamming codes $\cH_m$, $n=2^m-1, N=2^{n-m}$}\\
\hline
$m$ &4&5&6&7&8&9&10\\
\hline
$D^{L_2}(Z)$ &1.571  &2.239& 3.179& 4.50471 &6.377& 9.027 &12.763\\
${\sf E}D^{L_2}(N)$ &17.336 &50.058 &143.016 &406.518 &1152.64 &3264.14 &9238.04\\
\hline
\multicolumn{8}{|c|}{Hadamard codes $\cH_m^\bot$, $n=2^m-1, N=2^{m}$}\\
\hline
$2^{-n}D^{L_2}(Z)$ &0.058&0.042&0.030&0.021&0.015&0.011&0.008\\
$2^{-n}{\sf E}D^{L_2}(N)$ &0.068&0.049 &0.035   &0.025  &0.018  &0.012  &0.009\\
\hline
\end{tabular}
}}\end{center}
\vspace*{.05in}

\noindent The Hamming codes form dense, regular packings of the space, and their discrepancy is much smaller than the average over all subsets of the 
same size. In Sec.~\ref{sec:LP} we show that they in fact minimize the discrepancy among all codes of the same cardinality. In contrast, the code $
\cH_m^\bot$ has only one nonzero distance, and its discrepancy approaches the average as $n$ increases 
(since the numbers are large, we scale them by $2^{-n}$).

To give one more example, the discrepancy of the Golay code of length $n=23, N=4096$ equals $390.75$ while ${\sf E}D^{L_2}(N)=2755.68,$
and again it is a minimizer of discrepancy among all codes of the same size.  

Many more examples can be generated since the distance distributions of many codes are known explicitly \cite{mac91} (and some of them
are conveniently listed online in OEIS \cite{oeis}). 

%
%
\subsection{Discrepancy and the sum of distances} \label{sec:sum}
The original form of the Stolarsky principle \eqref{eq:Ssph} connects $D^{L_2}(Z)$ for spherical sets with the sum of distances in $Z$. 
For the Hamming space, this is not exactly true, but is in fact true approximately. To begin, we note that the average distance in ${\cX_n}$ equals 
   $$
   \langle d\rangle_{{\cX_n}}=2^{-2n}\sum_{x,y\in{\cX_n}} d(x,y)=\frac n2.
   $$
Next, observe that the average distance in $Z$ equals 
   $$
   \langle d\rangle_Z=\frac1{N^2}\sum_{i,j=1}^Nd(z_i,z_j)=\frac 1{N}\sum_{w=1}^n w A_w.
   $$
The generating functions of the numbers $(A_w)$ and $(A_w^\bot)$ are related by the MacWilliams equation 
   $$
   \sum_{w=0}^n A_i y^i=\frac{N}{2^n}\sum_{w=0}^n A_w^\bot(1+y)^{n-i}(1-y)^i,
   $$
implied by \eqref{eq:MW2} and \eqref{eq:gf}.
Differentiating on $i$ and setting $y=1$ we obtain
   $$
    \langle d\rangle_Z=\frac n2-\frac{A_1^\bot}2.
    $$
Thus, $\langle d\rangle_Z\le \frac n2$ with equality if and only if $A_1^\bot=0.$

The central binomial coefficient can be bounded as follows:
  \begin{equation}\label{eq:center}
 \frac {c} {\sqrt{n\pi}}\le \frac{\binom {2n} n}{2^{2n}}\le\frac {1} {\sqrt{n\pi}},
   \end{equation}
where $c=c_n$ is a constant that is greater than $0.9$ for all $n\ge 2$ and tends to 1 as $n$ increases.
Substituting these estimates in \eqref{eq:SH3}, we obtain
\begin{proposition} For any code $Z\subset {\cX_n}$
   \begin{gather}
\frac{2^n}{\sqrt{\pi n}}\Big(c\frac n2 -\Big(\frac{n\langle d\rangle_Z}{2}\Big)^{\half}\Big)\le D^{L_2}(Z)\le \frac{2^n}{\sqrt{\pi n}}   \Big(\frac n2-\frac{c}{2}\langle d\rangle_Z\Big)\label{eq:C}\\
 D^{L_2}(Z)\le c'\frac{2^n}{\sqrt{\pi n}} \frac n2, \label{eq:C0}
  \end{gather}
where \eqref{eq:C0} holds if $A_1^\bot=0$, and $c'$ approaches $1/2$ as $n$ increases.
\end{proposition}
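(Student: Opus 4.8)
The plan is to feed the two–sided central–binomial estimates \eqref{eq:center} into the closed form \eqref{eq:SH3}, $D^{L_2}(Z)=\Lambda_n-\tfrac1N\sum_{w=1}^n A_w\lambda(w)$. Applying \eqref{eq:center} directly to $\Lambda_n=\tfrac n{2^{n+1}}\binom{2n}{n}$ gives at once $\tfrac{cn}{2}\cdot\tfrac{2^n}{\sqrt{\pi n}}\le\Lambda_n\le\tfrac n2\cdot\tfrac{2^n}{\sqrt{\pi n}}$. The second ingredient is a two–sided estimate of $\lambda(w)$ itself: by \eqref{eq:eo} we have $\lambda(w)=2^{n-2m}m\binom{2m}{m}$ with $m=\lceil w/2\rceil$, so a second use of \eqref{eq:center} (with $m=\lceil w/2\rceil$, the cases $w=1,2$ handled by hand) yields
\[
  c\,2^{n}\sqrt{\tfrac{\lceil w/2\rceil}{\pi}}\ \le\ \lambda(w)\ \le\ 2^{n}\sqrt{\tfrac{\lceil w/2\rceil}{\pi}},\qquad w\ge 1,
\]
and since $\tfrac w2\le\lceil w/2\rceil\le\tfrac{w+1}{2}$ this is the clean comparison $\lambda(w)\asymp 2^{n}\sqrt{w/(2\pi)}$ (with $w$ replaced by $w+1$ on the upper side).

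For the right-hand inequality of \eqref{eq:C} I would bound the subtracted sum from below: from $\lambda(w)\ge\tfrac{c}{2\sqrt\pi}2^{n}\sqrt w\ge\tfrac{c}{2\sqrt\pi}2^{n}\cdot\tfrac w{\sqrt n}$ (valid since $1\le w\le n$),
\[
  \frac1N\sum_{w=1}^n A_w\lambda(w)\ \ge\ \frac{c\,2^{n}}{2\sqrt{\pi n}}\cdot\frac1N\sum_{w=1}^n wA_w\ =\ \frac c2\,\langle d\rangle_Z\cdot\frac{2^{n}}{\sqrt{\pi n}},
\]
and combining with $\Lambda_n\le\tfrac n2\cdot\tfrac{2^n}{\sqrt{\pi n}}$ gives $D^{L_2}(Z)\le\tfrac{2^n}{\sqrt{\pi n}}(\tfrac n2-\tfrac c2\langle d\rangle_Z)$. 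Inequality \eqref{eq:C0} is then just a specialization: the relation $\langle d\rangle_Z=\tfrac n2-\tfrac12A_1^{\bot}$ established before the proposition shows $A_1^\bot=0$ forces $\langle d\rangle_Z=\tfrac n2$, whence $D^{L_2}(Z)\le\tfrac{2^n}{\sqrt{\pi n}}\cdot\tfrac n2(1-\tfrac c2)$ and one takes $c'=1-c/2\to\tfrac12$.

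For the left-hand inequality of \eqref{eq:C} I would instead bound the subtracted sum from above: using $\lambda(w)\le 2^{n}\sqrt{\lceil w/2\rceil/\pi}$ and Cauchy--Schwarz,
\[
  \frac1N\sum_{w=1}^n A_w\lambda(w)\ \le\ \frac{2^{n}}{\sqrt\pi}\cdot\frac1N\Big(\sum_{w\ge1}A_w\Big)^{1/2}\Big(\sum_{w\ge1}A_w\lceil w/2\rceil\Big)^{1/2},
\]
and then $\sum_{w\ge1}A_w=N-1$ together with $\sum_{w\ge1}A_w\lceil w/2\rceil\le\tfrac12\sum_w(w+1)A_w=\tfrac12(N\langle d\rangle_Z+N-1)$ collapses this to $\tfrac{2^n}{\sqrt{2\pi}}\sqrt{\langle d\rangle_Z+1}=\tfrac{2^n}{\sqrt{\pi n}}\big(\tfrac{n(\langle d\rangle_Z+1)}{2}\big)^{1/2}$; with $\Lambda_n\ge\tfrac{cn}{2}\cdot\tfrac{2^n}{\sqrt{\pi n}}$ this is the stated lower bound up to the harmless shift $\langle d\rangle_Z\mapsto\langle d\rangle_Z+1$ inside the root (absorbed into the constant $c$, and vanishing outright when $Z$ has only even distances).

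The substitutions and the Cauchy--Schwarz/convexity step are routine. The one place that needs care is the uniform two–sided control of $\lambda(w)$ in the first step: extracting from \eqref{eq:eo} the estimate $\lambda(w)\asymp 2^n\sqrt{w/(2\pi)}$ valid for all $w\ge1$, tracking how the parity correction $\lceil w/2\rceil$ versus $w/2$ only perturbs lower-order terms, and checking the finitely many small-$w$ cases where \eqref{eq:center} would be applied with $m=1$. I expect that bookkeeping to be the only real obstacle; everything else reduces to the arithmetic of $\binom{2n}{n}/2^{2n}$.
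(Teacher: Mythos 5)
Your proposal is correct and follows essentially the same route as the paper: feed the central-binomial estimate \eqref{eq:center} into \eqref{eq:SH3} via \eqref{eq:eo}, lower-bound $\lambda(w)$ linearly in $w$ to get the upper bound of \eqref{eq:C} (and hence \eqref{eq:C0} from $\langle d\rangle_Z=\tfrac n2$ when $A_1^\bot=0$), and upper-bound $\tfrac1N\sum_w A_w\lambda(w)$ by a square-root of $\langle d\rangle_Z$ for the lower bound, your Cauchy--Schwarz step being equivalent to the paper's Jensen step. The $\lceil w/2\rceil$ parity slack you flag (the shift $\langle d\rangle_Z\mapsto\langle d\rangle_Z+1$ under the root) is exactly the lower-order slack the paper's own Jensen step silently absorbs, so your handling of it is, if anything, more explicit than the original.
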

\begin{proof} From \eqref{eq:center} we obtain
  \begin{gather*}
  \frac{c2^n}{\sqrt{\pi n}}\frac n2\le\Lambda_n\le \frac{2^n}{\sqrt{\pi n}}\frac n2, \quad
  \frac{c 2^{n}}{\sqrt {\pi n}}i\le\lambda(2i)\le \frac{2^n}{\sqrt {\pi}}\sqrt{i}.
  \end{gather*}
For the upper bound in \eqref{eq:C} we compute
  $$
  D^{L_2}(Z)\le \frac{2^n}{\sqrt{\pi n}}\frac n2-\frac{c}{N\sqrt{\pi n}}\sum_{i=1}^{n/2}(A_{2i-1}2^{n-1}(2i-1)+A_{2i}2^{n-1}(2i))
  $$
(assuming $n$ is even), and this yields \eqref{eq:C}. The case of odd $n$ is similar. The lower bound is obtained from \eqref{eq:SH3} once we compute (again
assuming that $n$ is even)
  $$
  \frac1N\sum_{w=1}^nA_w\lambda(w)\le\frac{2^n}{\sqrt{\pi}}\sum_{i=1}^{n/2}\Big(\frac{A_{2i-1}}{N}+\frac{A_{2i}}{N}\Big)\sqrt i
  \le \frac{2^n}{\sqrt {2\pi}}\langle d\rangle_Z^{\half},
  $$
where the last step uses Jensen's inequality.
\end{proof}
The bounds in this proposition apply to any code of a given length, without accounting for the structure of the code.
The lower bound in \eqref{eq:C} trivializes if $\langle d\rangle_Z=\frac n2,$ but provides useful estimates in other cases. 

Let $d(N):=\min_{Z:|Z|=N}\langle d\rangle_Z$ be the smallest possible average distance over codes of a given size. The problem of 
bounding $d(N)$ was raised in \cite{Ahlswede1977} and was the subject of a number of follow-up papers. Under the assumption $N\le 2^{n-1}$ a bound
   $
   d(N)\ge \frac n2-\frac{2^{n-2}}{N}
   $
was proved in \cite{Fu2001}. Substituting it in \eqref{eq:C}, we can state the following result.
\begin{proposition} For any $Z\subset{\cX_n}, |Z|=N\le 2^{n-1}$
   $$
   D^{L_2}(Z)\le \frac{2^n}{\sqrt{\pi n}}\Big(c'\frac n2+\frac{c2^{n-3}}{N}\Big).
   $$
\end{proposition}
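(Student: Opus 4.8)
The plan is to obtain the statement as a direct corollary by feeding the known lower bound on the minimal average distance into the upper bound on $D^{L_2}(Z)$ established in the preceding proposition. First, since $d(N)=\min_{Z:|Z|=N}\langle d\rangle_Z$, every code $Z$ with $|Z|=N$ satisfies $\langle d\rangle_Z\ge d(N)$. By the bound of \cite{Fu2001}, which requires exactly the hypothesis $N\le 2^{n-1}$, we have $d(N)\ge \frac n2-\frac{2^{n-2}}{N}$, and hence $\langle d\rangle_Z\ge \frac n2-\frac{2^{n-2}}{N}$ for every such $Z$.

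Next I would substitute this into the upper bound from \eqref{eq:C}, namely $D^{L_2}(Z)\le \frac{2^n}{\sqrt{\pi n}}\big(\frac n2-\frac c2\langle d\rangle_Z\big)$. The right-hand side is a decreasing function of $\langle d\rangle_Z$, so replacing $\langle d\rangle_Z$ by its lower bound only increases it, giving
$$
D^{L_2}(Z)\le \frac{2^n}{\sqrt{\pi n}}\Big(\frac n2-\frac c2\Big(\frac n2-\frac{2^{n-2}}{N}\Big)\Big)=\frac{2^n}{\sqrt{\pi n}}\Big(\frac n2\Big(1-\frac c2\Big)+\frac{c\,2^{n-3}}{N}\Big).
$$
Setting $c'=1-\frac c2$ yields the claimed inequality. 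Note that this value of $c'$ tends to $\frac12$ as $n\to\infty$, consistent with the constant $c'$ appearing in the previous proposition, so the two statements agree.

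There is essentially no obstacle here: the result is the composition of two already-established estimates, and the only care needed is bookkeeping — verifying that the hypothesis $N\le 2^{n-1}$ is precisely what licenses the use of Fu's bound, and that the constant $c'=1-c/2$ is the same one used earlier so that the statements remain internally consistent.
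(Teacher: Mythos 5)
Your proof is correct and is exactly the argument the paper intends: the paper itself merely says ``Substituting it in \eqref{eq:C}'', and your substitution of Fu's bound $d(N)\ge \frac n2-\frac{2^{n-2}}{N}$ (valid precisely under $N\le 2^{n-1}$) into the upper bound of \eqref{eq:C}, with $c'=1-c/2\to 1/2$, reproduces the stated inequality with the right constants.
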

Other lower bounds on $d(N)$ are given in \cite{Fu2001} and subsequent works, with the best known results appearing in the recent paper \cite{YuTan2019}.

An example of configurations that minimize the average distance is given by subcubes in ${\cX_n}$ of codimensions 1 and 2. 
Let $Z=C_{n-m}:=\{0,1\}^{n-m}\times\{0\}^{m}$ be a subcube of ${\cX_n}.$ 
The distance distribution of $Z$ is $A_w=\binom{n-m}w, 1\le w\le n-m$ and $A_w=0, n-m<w\le n.$ The discrepancy of $Z$ equals
   $$
   D^{L_2}(C_{n-m})=\Lambda_n-\frac 1{2^{n-m}}\sum_{w=1}^{n-m} \binom{n-m}w\lambda(w).
   $$
For $m=1$ this can be evaluated in closed form, for instance by computer \cite{RISC}, and we obtain
    $$
    \sum_{w=1}^{n-1} \binom{n-1}w\lambda(w)=(n-1)\binom{2n-2}{n-1}
    $$
(Ap{\'e}ry numbers, \href{http://oeis.org/A005430}{A005430}) and
$$
D^{L_2}(C_{n-1})=\frac{n}{2^{n+1}}\binom {2n}{n}-\frac{n-1}{2^{n-1}}\binom{2n-2}{n-1}.
$$
The question whether subcubes are also discrepancy minimizers is likely resolved in the negative, see the discussion in Section~\ref{sec:LP} below.

\section{A Fourier transform view of discrepancy}\label{sec:Fourier}
In this section we derive a representation of the discrepancy $D^{L_2}(Z)$ in the transform domain. 
In view of \eqref{eq:kernel} this amounts 
to representing the kernel $\lambda(x,y)$ \eqref{eq:lambda} as a linear combination of the Krawtchouk polynomials. A direct approach is to
compute the inner product of the expression \eqref{eq:lambda} with $K_k^{(n)}$ for all $k,$ but this looks difficult. At the same time, from
\eqref{eq:cap2} it suffices to find the expansion of $\mu_t(x,y)=|B(x,t)\cap B(y,t)|$ (cf. \eqref{eq:mut}) and then ``integrate'' on $t$. 
Let $\phi_t=\mathbbm{1}_{\{0,1,\dots,t\}}$ be the indicator function of the set $\{0,1,\dots,t\},$ then $\mu_t=\phi_t\ast\phi_t$ is a convolution 
square; in more detail,
    \begin{equation}\label{eq:conv}
    \mu_t(x,y)=\sum_{z\in{\cX_n}}\phi_t(d(x,z))\phi_t(d(z,y)).
    \end{equation}
\begin{lemma}\label{thm:Bt} Let $x,y\in {\cX_n}$ be such that $d(x,y)=w.$
The Krawtchouk expansion of the kernel $\mu_t(x,y), t=0,\dots,n$ has the following form:
  \begin{equation}\label{eq:Bt}
  \mu_t(x,y)=2^{-n}\sum_{k=0}^n c_k(t)^2K_k^{(n)}(w),
  \end{equation}
where
  \begin{gather}
  c_0(t)=\sum_{i=0}^t \binom ni \label{eq:0}\\
  c_k(t)=\begin{cases} K_{t}^{(n-1)}(k-1), &t=1,\dots,n-1\\0& t=n\end{cases}, \quad k=1,\dots,n.  \label{eq:1n}
  \end{gather}
\end{lemma}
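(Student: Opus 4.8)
The plan is to use the convolution structure \eqref{eq:conv}: since $\mu_t=\phi_t\ast\phi_t$ as functions on the Hamming scheme, its expansion in the Krawtchouk basis has coefficients that are the \emph{squares} of the coefficients of $\phi_t$. So the first step is to write $\phi_t(d(x,\cdot))=2^{-n}\sum_{k=0}^n c_k(t)\,\mathcal{E}_k(x,\cdot)$, where $\mathcal{E}_k$ is the $k$-th primitive idempotent of the Hamming scheme (i.e., $\mathcal{E}_k(x,y)=K_k^{(n)}(d(x,y))$ up to the normalization fixed by \eqref{eq:ort}), and identify $c_k(t)$ as the appropriate Fourier--Krawtchouk coefficient of the radial function $\phi_t$. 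Concretely, $c_k(t)$ is obtained by pairing $\phi_t$ against $K_k^{(n)}$ with the scheme weights: $c_k(t)=\sum_{i=0}^t \binom ni^{-1}\cdot(\text{something})$ — more precisely, using the symmetry relation \eqref{eq:symmetry} one gets $c_k(t)=\sum_{i=0}^t K_i^{(n)}(k)$. Then the convolution-square identity gives $\mu_t(x,y)=2^{-n}\sum_k c_k(t)^2 K_k^{(n)}(w)$, which is \eqref{eq:Bt}; the remaining work is to evaluate $c_k(t)=\sum_{i=0}^t K_i^{(n)}(k)$ in closed form.

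For the evaluation of $c_k(t)$, I would argue as follows. For $k=0$ we have $K_i^{(n)}(0)=\binom ni$, so $c_0(t)=\sum_{i=0}^t\binom ni=|B(x,t)|$, giving \eqref{eq:0}. For $k\ge 1$, the partial-sum-of-Krawtchouk-polynomials identity $\sum_{i=0}^t K_i^{(n)}(k)=K_t^{(n-1)}(k-1)$ is the key computation; it can be derived from the generating function \eqref{eq:gf} by summing a geometric-type series in $z$, or equivalently from the Christoffel--Darboux / telescoping identity for Krawtchouk polynomials. The boundary case $t=n$ gives $\sum_{i=0}^n K_i^{(n)}(k)=0$ for $k\ge 1$ (this is just the generating function \eqref{eq:gf} at $z=1$, which is $(1+1)^{n-k}(1-1)^k=0$), matching the $t=n$ branch of \eqref{eq:1n}. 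This handles all the asserted values of $c_k(t)$.

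The cleanest route to the two facts I need — (i) that a convolution square has coefficients equal to squared coefficients, and (ii) the partial-sum identity $\sum_{i=0}^t K_i^{(n)}(k)=K_t^{(n-1)}(k-1)$ — is to phrase everything through generating functions. For (ii): multiply \eqref{eq:gf} by $1/(1-z)$ and extract the coefficient of $z^t$; since $\frac{(1+z)^{n-x}(1-z)^x}{1-z}=(1+z)^{n-x}(1-z)^{x-1}=(1+z)^{(n-1)-(x-1)}(1-z)^{x-1}$, the coefficient of $z^t$ on the right is exactly $K_t^{(n-1)}(x-1)$ by \eqref{eq:gf} again, evaluated at dimension $n-1$ and argument $x-1$; setting $x=k$ gives the claim. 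For (i): in the Bose--Mesner algebra of the Hamming scheme, the $\mathcal{E}_k$ are orthogonal idempotents, so if $\phi_t=\sum_k c_k(t)\mathcal{E}_k$ (with the $2^{-n}$ normalization from \eqref{eq:ort}) then $\phi_t\ast\phi_t=\sum_k c_k(t)^2\mathcal{E}_k$ directly.

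The main obstacle I expect is bookkeeping around normalization constants — the paper uses the nonstandard normalization $K_k^{(n)}(0)=\binom nk$ and the weights $\binom ni 2^{-n}$, so the precise placement of the factors $2^{-n}$, $\binom ni$, and $\binom nk$ in the definition of $c_k(t)$ and in the convolution formula must be tracked carefully to land exactly on \eqref{eq:Bt}–\eqref{eq:1n} rather than an off-by-a-binomial version. The underlying identities themselves are routine once the generating function \eqref{eq:gf} is in hand; verifying the $k=0$ term separately (because the partial-sum identity degenerates there, as $K_t^{(n-1)}(-1)$ is not what we want) is a small but necessary case split.
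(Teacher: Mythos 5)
Your proposal is correct and follows the same overall strategy as the paper: expand the radial indicator $\phi_t$ in the Krawtchouk basis and use the convolution-square (Bose--Mesner idempotent) structure of $\mu_t=\phi_t\ast\phi_t$ to square the coefficients, with the $2^{-n}$ normalization landing exactly as in \eqref{eq:Bt}. The only real divergence is in how the coefficients are evaluated: the paper computes $c_k(t)=\binom nk^{-1}\sum_{i=0}^t\binom ni K_k^{(n)}(i)$ by the Rodrigues-type formula \eqref{eq:Rod} (telescoping the finite differences) and then applies the symmetry \eqref{eq:symmetry} to reach $K_t^{(n-1)}(k-1)$, whereas you apply \eqref{eq:symmetry} first to get $c_k(t)=\sum_{i=0}^t K_i^{(n)}(k)$ and then evaluate this partial sum from the generating function \eqref{eq:gf}, via $(1+z)^{n-k}(1-z)^k/(1-z)=(1+z)^{(n-1)-(k-1)}(1-z)^{k-1}$; your handling of $k=0$ and of the boundary case $t=n$ (setting $z=1$) likewise matches the case split in \eqref{eq:0}--\eqref{eq:1n}. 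Both evaluations are correct and of comparable length; the generating-function route avoids invoking \eqref{eq:Rod} altogether, while the paper's route avoids any discussion of which formal series are polynomials. In short, same decomposition and same key structural fact, with an equivalent but differently derived closed form for $c_k(t)$.
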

\begin{proof}
Let $\phi_t(l)$ be the function defined before the lemma, and let 
  $$
  \phi_t(l)=2^{-n}\sum_{k=0}^n c_k(t) K_k^{(n)}(l), \quad l=0,1,\dots,n
  $$
be its Krawtchouk expansion, where $c_k=\langle\phi_t,K_k^{(n)}\rangle/\binom nk.$
Since $K_0^{(n)}\equiv1,$ we obtain
    \begin{gather*}
     c_0(t)=\sum_{i=0}^t \binom ni .
         \end{gather*}
Further, for $k=1,\dots,n;\; t\le n-1$ we compute
      \begin{align}
    c_k(t)&=\frac 1{\binom nk}\sum_{i=0}^t\binom ni K_k^{(n)}(i) 
    =\frac 1{\binom nk}\frac nk \binom{n-1}tK_{k-1}^{(n-1)}(t), \label{eq:1n-1}\\
    &=\frac{1}{\binom{n-1}{k-1}}\binom{n-1}tK_{k-1}^{(n-1)}(t), \nonumber\\
    &=K_t^{(n-1)}(k-1),\nonumber
    \end{align}
where the expression for the sum in \eqref{eq:1n-1} follows by \eqref{eq:Rod} and the transition to the last line uses \eqref{eq:symmetry}.
Finally, for $t=n$ from \eqref{eq:n/2} we obtain
   $$
   c_k(n)=\frac 1{\binom nk}\sum_{i=0}^n (-1)^i\binom ni=0.
   $$
Now from \eqref{eq:conv} we obtain
  \begin{align*}
  \mu_t(x,y)&=\sum_{z\in\cX_n}\phi_t(d(x,z))\phi_t(d(z,y))\\
     &=2^{-2n}\sum_{z\in\cX_n} \sum_{k=0}^n c_k(t) K_k^{(n)}(d(x,z))\sum_{m=0}^n c_m(t) K_m^{(n)}(d(z,y))\\
     &=2^{-2n}\sum_{k=0}^n\sum_{m=0}^n c_k(t)c_m(t)\sum_{z\in\cX_n}K_k^{(n)}(d(x,z))K_m^{(n)}(d(z,y)),
  \end{align*}
which together with  \eqref{eq:convolution} yields \eqref{eq:Bt}.
\end{proof}

Calculations of the Fourier expansion of the intersection of metric balls form a recurrent topic in papers devoted to uniformly distributed sets;
see \cite[Eq.(10)]{Brauchart2019} for the spherical case, \cite[Eq.(4.52)]{Skriganov2019} for general two-point homogeneous spaces, and \cite[Appendix A]{Torquato2003} for $\reals^n.$
Additionally, in the spherical case, a function analogous to $\mu(x,y)$ was studied in \cite{Brauchart2013} in the context of Hilbert space reproducing kernels.
Casting our results in their language, we note that $\mu(x,y)$ is a reproducing kernel for the space of real functions $f$ on ${\cX_n}$ 
representable in the form
   \begin{equation}\label{eq:g1}
   f(x)=\sum_{t=0}^n \sum_{u\in {\cX_n}} g(u,t) \mathbbm{1}_{B(u,t)}(x)
   \end{equation}
with respect to the inner product $( f_1,f_2)=\sum_t\sum_u g_1(u,t) g_2(u,t),$ viz.,
   $$
   ( \mu(\cdot,y),f)=f(y).
   $$

Lemma \ref{thm:Bt} immediately implies a Krawtchouk expansion for the kernel $\lambda(x,y)$.
\begin{corollary}\label{cor:Klambda} Let $x,y\in{\cX_n}$ be such that $d(x,y)=w.$ We have
   \begin{gather}\label{eq:Klambda}
     \lambda(x,y)=\lambda(w)=\sum_{k=0}^n\hat\lambda_k K_k^{(n)}(w)\\
     \hat\lambda_0=\Lambda_n, \;\hat\lambda_k=-2^{-n}\binom{2n-2k}{n-k}\binom{2k-2}{k-1}/\binom {n-1}{k-1}, \;k=1,2,\dots,n,\label{eq:cf}
    \end{gather}
and thus the kernel $(-\lambda(x,y))$ is positive definite up to an additive constant.
\end{corollary}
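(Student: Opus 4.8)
The plan is to obtain the Krawtchouk expansion of $\lambda$ directly from the expansion of $\mu_t$ in Lemma~\ref{thm:Bt} by summing on $t$ and then using the relation between $\mu$ and $\lambda$. Recall from \eqref{eq:cap2} that, for $d(x,y)=w$,
\[
\sum_{t=0}^n \mu_t(x,y)=\mu(x,y)=|{\cX}|(n+1)-\sum_{z\in{\cX}}d(z,u)-\lambda(x,y),
\]
so that $\lambda(w)=\text{const}-\mu(w)$ where the constant does not depend on $w$. Hence the Krawtchouk coefficients $\hat\lambda_k$ for $k\ge 1$ are simply the negatives of the coefficients of $\mu$, while $\hat\lambda_0$ absorbs the constant term and is pinned down by the already-established value $\langle\lambda\rangle_{\cX}=\Lambda_n=\hat\lambda_0 K_0^{(n)}(\cdot)$ averaged appropriately — indeed for a radial function $f(w)=\sum_k \hat f_k K_k^{(n)}(w)$ one has $\langle f\rangle_{\cX}=2^{-n}\sum_w \binom nw f(w)=\hat f_0$ by orthogonality \eqref{eq:ort}. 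So $\hat\lambda_0=\Lambda_n$ is immediate from \eqref{eq:av}.

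For $k\ge 1$, I would first sum \eqref{eq:Bt} over $t$ to get the expansion of $\mu$:
\[
\mu(w)=2^{-n}\sum_{k=0}^n\Big(\sum_{t=0}^n c_k(t)^2\Big)K_k^{(n)}(w).
\]
By \eqref{eq:1n}, for each fixed $k\in\{1,\dots,n\}$ the relevant coefficient is $\sum_{t=0}^{n}c_k(t)^2=\sum_{t=0}^{n-1}\big(K_t^{(n-1)}(k-1)\big)^2$ (the $t=n$ term vanishes). Therefore the coefficient of $K_k^{(n)}(w)$ in $\mu(w)$ is $2^{-n}\sum_{t=0}^{n-1}(K_t^{(n-1)}(k-1))^2$, and since $\lambda=\text{const}-\mu$, the coefficient of $K_k^{(n)}(w)$ in $\lambda(w)$ for $k\ge 1$ is its negative, namely $\hat\lambda_k=-2^{-n}\sum_{t=0}^{n-1}(K_t^{(n-1)}(k-1))^2$, which is \eqref{eq:cf}. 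This also forces the constant to be consistent: one checks that $\mu(w)=\mu_0+\text{(sum over }k\ge1)$ where $\mu_0=2^{-n}\sum_{t}c_0(t)^2$, and then $\hat\lambda_0=\text{const}-\mu_0$; but rather than recompute this, I would simply invoke $\hat\lambda_0=\langle\lambda\rangle_{\cX}=\Lambda_n$ from \eqref{eq:av}, which is cleaner.

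The positive-definiteness assertion is then essentially a tautology: for $k\ge 1$, $\hat\lambda_k=-2^{-n}\sum_{t=0}^{n-1}(K_t^{(n-1)}(k-1))^2\le 0$. Thus $\lambda(w)-\Lambda_n=\sum_{k=1}^n\hat\lambda_k K_k^{(n)}(w)$ has all nonpositive Krawtchouk coefficients, meaning $-(\lambda(w)-\Lambda_n)$ has all nonnegative coefficients in the Krawtchouk basis; since a radial function on the Hamming scheme with nonnegative Krawtchouk coefficients is positive definite (this is the standard characterization, e.g.\ via the MacWilliams transform / the fact that the Krawtchouk polynomials are the eigenvalues of the association scheme), it follows that $-\lambda$ is positive definite modulo the additive constant $\Lambda_n$. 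I do not expect any real obstacle here; the only thing to be careful about is bookkeeping the $k=0$ term separately from $k\ge 1$ and making sure the $t=n$ boundary case of $c_k(t)$ is correctly discarded, both of which are handled above.
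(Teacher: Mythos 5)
Your proposal is correct and follows essentially the same route as the paper: both pass from $\lambda$ to $\mu=\sum_t\mu_t$ via \eqref{eq:cap2}, substitute the expansion of Lemma~\ref{thm:Bt}, and negate the coefficients for $k\ge 1$, with positive definiteness read off from the signs of the $\hat\lambda_k$. The only cosmetic difference is that you pin down $\hat\lambda_0$ as $\langle\lambda\rangle_{\cX}=\Lambda_n$ by orthogonality of $K_0^{(n)}$, whereas the paper evaluates the constant term via \eqref{eq:sqsum}, \eqref{eq:closed}, \eqref{eq:av}---an observation the paper itself makes in the remark following the corollary.
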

\begin{proof}
Starting with \eqref{eq:cap2} and using \eqref{eq:Bt}, we find that
 \begin{align*}
 \lambda(x,y)&=2^{n-1}(n+2)-\sum_{t=0}^n\mu_t(x,y)\\
 &=2^{n-1}(n+2)-2^{-n}\sum_{t=0}^n\Big(\sum_{i=0}^t\binom ni\Big)^2-2^{-n}\sum_{k=1}^{n}\sum_{t=0}^{n-1}c_k(t)^2 K_k^{(n)}(w).
 \end{align*}
On account of \eqref{eq:sqsum}, \eqref{eq:closed}, \eqref{eq:av}, the constant term $\hat\lambda_0=\Lambda_n$, and 
  $$
  \hat\lambda_k=-2^{-n}\sum_{t=0}^n c_k(t)^2, k=1,\dots,n.
  $$
Since $c_k(n)=0,$ we obtain $\hat\lambda_k=-2^{-n}\sum_{t=0}^{n-1} K_t^{(n-1)}(k-1)^2,$ and by \eqref{eq:SoS} this yields \eqref{eq:cf}.

\end{proof}

\vspace*{.2in}\hspace*{-.2in}
\includegraphics[width=.5\linewidth]{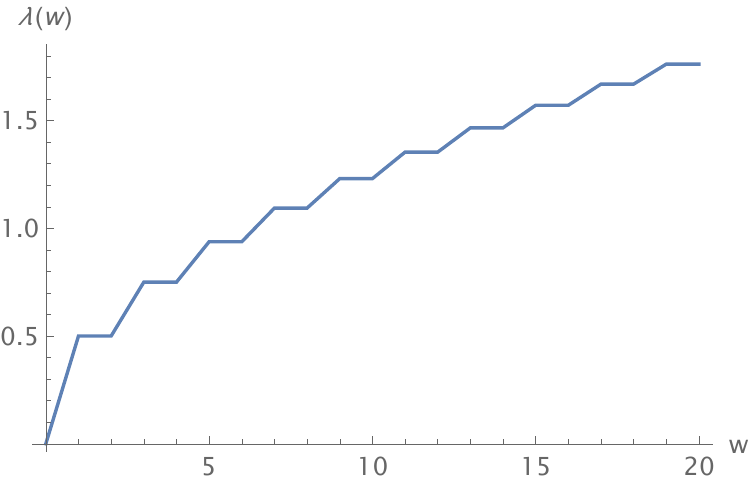}\hspace*{.3in}
\includegraphics[width=.45\linewidth]{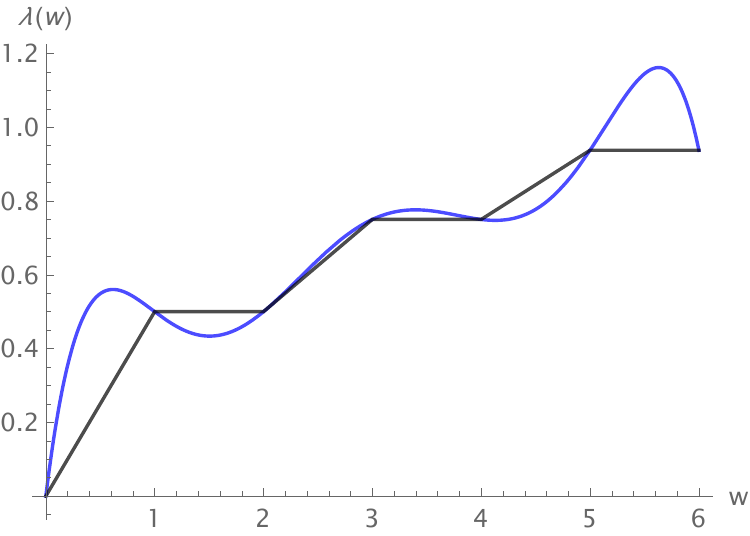}

\vspace*{.1in}\noindent\hangindent.2in\hangafter=1{\footnotesize{\sc Fig.1:} The plots show $\lambda(w)$ for $n=20$ (left figure) and $n=6$ (right figure). In the right plot we also show the polynomial \eqref{eq:Klambda} that is equal to $\lambda(w)$ at integer values of $w$. The plots are scaled by $2^{-n}$.\par}
\vspace*{.2in}

{\em Remarks:} 1. The constant coefficient of the Fourier expansion is the expectation of the function with respect to the
underlying measure, and this is indeed the case in \eqref{eq:Klambda}; cf.~\eqref{eq:av}.

2. The fact that $\lambda$ is an invariant negative definite kernel implies, independently of \eqref{eq:kernel}, that $\langle\lambda\rangle_{\cX_n}\ge 
\langle\lambda\rangle_Z$ for any $Z\subset{\cX_n}$. This is the well-known ``inequality about the mean'' \cite{kab78}.

3. Taking $w=0$ in Eq.~\eqref{eq:Klambda} we can rewrite the expansion for $\lambda$ in the following form:
    $$
    \lambda(w)=\sum_{k=1}^n\hat \lambda_k(K_k^{(n)}(w)-K_k^{(n)}(0)).
    $$

It is easy to check that the coefficients $\hat\lambda_k$ are symmetric with respect to the middle, and their absolute values 
decrease for $k<n/2$ and increase for $k>n/2$.
\begin{lemma}\label{lemma:monmid} We have
    \begin{gather*}
    \hat\lambda_i=\hat\lambda_{n-i+1}, \quad i=1,\dots,n/2;\; n\text{ even}\\
     \hat\lambda_{\frac{n+1}2-i}=\hat\lambda_{\frac{n+1}2+i}; \quad i=1,\dots,\frac{n-1}2;\; n\text{ odd}\\
     \max_{1\le k\le n}\hat\lambda_k=\begin{cases}
     \hat\lambda_{\frac n2}=\hat\lambda_{\frac n2+1}=-2^{-n}\frac{n}{2n-2}\binom n{n/2}, &n\text{ even}\\
     \hat\lambda_{\frac{n+1}2}=-2^{-n}\binom{n-1}{\frac{n-1}2},&n\text{ odd}.
     \end{cases}
     \end{gather*}
  
\end{lemma}
\begin{proof} The statements about the symmetry are checked directly using \eqref{eq:cf}. To prove the claim about $\max_k\hat\lambda_k$, we compute
   $$
   \frac{\hat\lambda_k}{\hat\lambda_{k-1}}=\frac{2k-3}{2n-2k-1}.
   $$
For $n$ odd this quantity is less than one for $k\le (n+1)/2$ and greater than one for $k\ge (n+3)/2.$ Thus the sequence $(|\hat\lambda_k|)_k$ is monotone decreasing till $k=(n+1)/2$ and monotone increasing after that, with a unique minimum at $(n+1)/2.$ Similarly, for $n$ we observe that $\hat \lambda_{n/2}=\hat\lambda_{\frac n2+1}$ and both attain the maximum value. Finally, the expressions for this value follow directly from \eqref{eq:cf} for both odd and even $n$.
\end{proof}

\vspace*{.05in}We are now in a position to compute a transform-domain representation of the discrepancy of the set $Z\subset{\cX_n}.$ The following theorem, which gives an explicit form of the formula for discrepancy found earlier in \eqref{eq:not0}, is immediate.
  \begin{theorem} Let $Z\subset {\cX_n}$ be a code of size $N$ with dual distance distribution $(A_k^\bot, k=0,\dots,n)$ \eqref{eq:MW2}. Then
    \begin{align}
        D^{L_2}(Z)&=2^{-n}\sum_{k=1}^n\frac{\binom{2n-2k}{n-k}\binom{2k-2}{k-1}}{\binom {n-1}{k-1}} A_k^\bot \label{eq:Ddual}
      \end{align}
\end{theorem}

For instance, for the Hamming code the dual distance distribution contains only one nonzero coefficient, $A_{(n+1)/2}^\bot=n.$
From \eqref{eq:Ddual} we obtain
     \begin{equation}\label{eq:DHF}
     D^{L_2}(\cH_m)=-n\hat\lambda_{\frac{n+1}2}.
     \end{equation}
which matches the expression for $D^{L_2}(\cH_m)$ derived earlier in \eqref{eq:DH}. 

As noted in Remark 1 before Lemma~\ref{lemma:monmid}, Fourier expansion of the quadratic discrepancy \eqref{eq:Ddual} does not include the constant term; see also \cite{Skriganov2019} for the case of the Euclidean sphere.

\section{Discrepancy as a potential function on \texorpdfstring{${\cX_n}$}{}}\label{sec:LP}

In this section we study configurations in $\{0,1\}^n$ that minimize discrepancy among all codes of the same cardinality.
Trivial examples are given by $Z=\{0,1\}^n$ and $Z=\{x\},$ where $x$ is any point in the space, which have discrepancy 0 (for the entire
space this follows from the definition and for singletons this is implied by \eqref{eq:not0}).

Let $Z\subset{\cX_n}$ be a code and let $f(x,y)=f(d(x,y))$ be a function on ${\cX_n}\times{\cX_n}$ that depends on the distance between the arguments. 
The {\em potential energy} of $Z$ with respect to
$f$ is defined as
    \begin{equation}\label{eq:energy}
    E_f(Z)=\frac1N\sum_{\begin{substack}{i,j=1\\i\ne j}\end{substack}}^N f(d(z_i,z_j)).
    \end{equation}
To relate discrepancy of the code to this definition, define a potential function 
  $f:\{1,\dots,n\}\to \reals$ by setting 
 \begin{equation}\label{eq:f}
     f(x,y)=f(d(x,y))=\Lambda_n-\lambda(d(x,y)).
 \end{equation}
 By Corollary \ref{cor:Klambda}, the function $f$ is positive definite, i.e., it is contained in the nonnegative cone of the Krawtchouk basis.
The discrepancy of the code $Z$ equals
   $$
   D^{L_2}(Z)=\frac 1N(\Lambda_n+E_f(Z)).
   $$
Thus, $D^{L_2}(Z)$ is proportional to the potential energy of the code with respect to $f$ (up to an additive term which can be removed by adjusting the definition of $f$).

Universal bounds on the discrepancy of a code of size $N$ can be obtained using linear programming. For the ease of writing, we 
consider an LP problem for the maximum value of $\langle \lambda\rangle_Z$ rather than the minimum of $E_f$.
By Stolarsky's invariance, Theorem \ref{thm:SHF}, the value of the linear program
  \begin{gather}
 \max \Big\{\sum_{k=1}^n A_k\lambda(k)\;\Big|\;
  \sum_{k=1}^n A_kK_i^{(n)}(k)\ge -\binom ni, i=1,\dots,n;
  \sum_{k=1}^n A_k=N-1, A_k\ge 0\Big\} \label{eq:primal}
  \end{gather}
gives a lower bound to $D^{L_2}(Z)$, and so does any feasible solution of the dual LP problem. The dual problem has the following form:
  \begin{equation}\label{eq:dualLP}
  \min\Big\{\sum_{i=0}^n \binom ni h_i-h_0 N\;\Big|\; \sum_{i=0}^n h_iK_i^{(n)}(k)\le-\lambda(k), k=1,\dots,n; h_i\ge0, i=1,\dots,n\Big\}
  \end{equation}
where we used the fact that $K_k^{(n)}(0)=\binom nk.$ This approach is rooted in the works of Delsarte, and the
first paper to apply it to the problem of estimating energy of point sets was \cite{Yudin1993} which addressed the case of spherical codes. For finite spaces an analogous bound on the energy of the code was derived in \cite{ash01b} which showed the following
proposition (we state it for the case of $f=\lambda$).

\begin{proposition}\label{prop:LP} Let $Z\subset\{0,1\}^n$ be a code of size $N$ and let $E_\lambda(Z):=\frac{1}{N}\sum_{i,j}\lambda(d(z_i,z_j))$ 
be the energy associated with the potential function $\lambda(i).$ 
Let $h(i)=\sum_{k=0}^n h_kK_k^{(n)}(i)$ be a polynomial on $\{0,1,\dots,n\}$ such that (a), $h_k\ge 0$ for all $k\ge 1$ such 
that $A_k^\bot >0$ and
$(b)$, $h(i)\le -\lambda(i)$ for all $i\ge 1$ such that $A_i>0.$ Then
   \begin{equation}\label{eq:energybound}
   E_\lambda(Z)\le h(0)-Nh_0
   \end{equation}
with equality if and only if all the inequalities in the assumptions (a),(b) are satisfied with equality.
\end{proposition}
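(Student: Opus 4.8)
The plan is to prove the energy bound \eqref{eq:energybound} by a standard Delsarte-type linear programming argument, exploiting the positive definiteness of the relevant kernels. First I would write the energy as
\begin{align*}
E_\lambda(Z)=\frac 1N\sum_{i,j=1}^N\lambda(d(z_i,z_j))-\lambda(0)N=\sum_{i=1}^n A_i\lambda(i),
\end{align*}
using the definition of the distance distribution $A_w=\frac1N|\{(z_1,z_2)\in Z^2: d(z_1,z_2)=w\}|$ and $\lambda(0)=0$, together with $\sum_{i=0}^n A_i=N$. The idea is then to bound $\sum_{i=1}^n A_i\lambda(i)$ from above using a test polynomial $h(i)=\sum_{k=0}^n h_k K_k^{(n)}(i)$ satisfying the two hypotheses. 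By assumption (b), $h(i)\le -\lambda(i)$ whenever $A_i>0$ (and $A_i=0$ otherwise), so $-\sum_{i=1}^n A_i\lambda(i)\ge \sum_{i=1}^n A_i h(i)$, hence
\begin{align*}
E_\lambda(Z)=\sum_{i=1}^n A_i\lambda(i)\le -\sum_{i=1}^n A_ih(i)=h(0)A_0-\sum_{i=0}^n A_i h(i)=Nh(0)-\sum_{i=0}^n A_ih(i),
\end{align*}
where I used $A_0=1$ is not quite right for the normalization here — rather $A_0=1$ and $\sum_i A_i=N$, so $h(0)A_0=h(0)$, and I would carry the bookkeeping carefully, arriving at $E_\lambda(Z)\le h(0)-\sum_{i=0}^n A_i h(i)$.

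The second ingredient is to show $\sum_{i=0}^n A_i h(i)\ge N h_0$. Expanding $h$ in the Krawtchouk basis and using the MacWilliams identity \eqref{eq:MW1} in the form $A_k^\bot=\frac1N\sum_{i=0}^n K_k^{(n)}(i)A_i$, one gets
\begin{align*}
\sum_{i=0}^n A_i h(i)=\sum_{k=0}^n h_k\sum_{i=0}^n A_i K_k^{(n)}(i)=N\sum_{k=0}^n h_k A_k^\bot.
\end{align*}
Since $A_0^\bot=\frac1N\sum_i A_i=1$ and $A_k^\bot\ge 0$ for all $k$ (the dual distance distribution of any code is nonnegative, a consequence of $K_k^{(n)}$ being positive definite on the Hamming scheme), and since hypothesis (a) guarantees $h_k\ge 0$ whenever $A_k^\bot>0$, every term with $k\ge 1$ contributes a nonnegative amount, so $\sum_{k=0}^n h_k A_k^\bot\ge h_0$. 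Combining with the previous display yields $E_\lambda(Z)\le h(0)-Nh_0$, which is \eqref{eq:energybound}.

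For the equality characterization, I would trace the two inequalities used: the first becomes equality iff $h(i)=-\lambda(i)$ for every $i$ with $A_i>0$, i.e. hypothesis (b) holds with equality on the support of $A$; the second becomes equality iff $h_k A_k^\bot=0$ for every $k\ge 1$, i.e. $h_k=0$ whenever $A_k^\bot>0$, which is hypothesis (a) with equality. I do not anticipate a serious obstacle here — the argument is entirely formal once one has the MacWilliams transform and the nonnegativity $A_k^\bot\ge 0$; the only point requiring a little care is the normalization of $E_\lambda$ versus $\langle\lambda\rangle_Z$ and keeping the $A_0$, $A_0^\bot$ terms straight, and noting that the proposition as stated is just the specialization $f=\lambda$ of the general result of \cite{ash01b}, so one may alternatively simply cite that reference and verify that $\lambda$ satisfies the required hypotheses (radial, and $-\lambda$ positive definite up to a constant by Corollary~\ref{cor:Klambda}).
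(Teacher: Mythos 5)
Your argument is correct and is exactly the standard Delsarte linear-programming duality argument that the paper itself invokes: the paper gives no independent proof, citing \cite{ash01b} and remarking that the bound is a rephrasing of the dual LP \eqref{eq:dualLP} with the equality conditions being complementary slackness, which is precisely what your two-step estimate (bounding $\sum_i A_i\lambda(i)$ by $-\sum_i A_ih(i)$ via (b), then $\sum_i A_ih(i)=N\sum_k h_kA_k^\bot\ge Nh_0$ via MacWilliams, $A_k^\bot\ge0$, and (a)) spells out. The momentary slip $h(0)A_0=Nh(0)$ is immaterial since you correct it to $h(0)$ (as $A_0=1$), and the final bound and equality characterization are right.
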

Clearly, the bound \eqref{eq:energybound} is a rephrasing of the dual problem \eqref{eq:dualLP}, and the 
conditions for equality are just the corresponding complementary slackness conditions. Let $D^{L_2}(n,N)$ be the minimum discrepancy 
\eqref{eq:Dmin}, then
  \begin{equation}\label{eq:E-D}
  D^{L_2}(n,N)=\Lambda_n-\frac 1N E_\lambda(n,N),
  \end{equation}
   where
   $$
   E_\lambda(n,N)=\max_{Z\subset\{0,1\}^n, |Z|=N
   } E_\lambda(Z).
   $$
A polynomial that satisfies the constraints of the problem \eqref{eq:dualLP}, gives a universal bound $D^{L_2}(Z)\ge D^{L_2}(n,N)$ for all codes of cardinality $N$ irrespective of their distance distribution (this is because these constraints are more stringent than in Proposition \ref{prop:LP}).

Finding feasible vectors for the problem \eqref{eq:dualLP} in some cases is aided by our knowledge of the Krawtchouk coefficients of the energy function given in \eqref{eq:cf}, see for instance the proof of \eqref{eq:LPHamming} below. This information can be used for constructing $h(x)$ as long as we satisfy the inequalities $h(i)\le f(i),$ although controlling these conditions is generally not immediate. We also note that, from \eqref{eq:monotone}, the function $\lambda(2i)$ is ``concave,'' i.e., $\Delta^2(\lambda(2i))<0$ for all $i\ge 1,$ where $\Delta^2$ is the second finite difference.

In the next theorem we give some simple bounds on $E_\lambda(N)$ which will be used in examples below.
  \begin{theorem}\label{thm:LPbounds} For any $N\ge 1$
   \begin{gather}
   E_\lambda(n,N)\le (N-1)\lambda(n).\label{eq:N}
     \end{gather}
For $n=2t-1, t\ge 2$ 
   \begin{equation}\label{eq:LPsimplex}
   E_\lambda(n,N)\le \begin{cases}\lambda(t)(N-\half)& t\text{ even}\\[.1in]
   \frac{\lambda(t)}{n+1}(Nn-(n-1)/2)& t\text{ odd.}
   \end{cases}
   \end{equation} 
For any $N\ge1$
    \begin{align}\label{eq:LPHamming}
    E_\lambda(n,N)\le 
       \begin{cases}
       N\Lambda_n+(2^n-N)\hat\lambda_{\frac {n+1}2} &n\text{ odd}\\
       N\Lambda_n+(2^n-N)\hat\lambda_{\frac n2} &n\text{ even}. 
       \end{cases}
    \end{align}   
   \end{theorem}
\begin{proof} 
We have argued earlier, after Theorem~\ref{thm:SHF},  that \eqref{eq:N} holds for $N=2.$ 
To prove it for any $N$, take the constant polynomial $h(x)=-\lambda(n).$
The monotonicity condition \eqref{eq:monotone} implies that $h(i)\le -\lambda(i)$ for all $i=1,\dots,n,$ and the claim follows from \eqref{eq:energybound}.

To show \eqref{eq:LPsimplex}, choose
  $
  h(x)=h_0+h_1(K_1^{(n)}(x)+K_n^{(n)}(x)),
  $
  where $h_1=\frac{\lambda(t)}{4t}$ and
  \begin{gather*}
  h_0=-\lambda(t), \; t \text{ even};\quad
  h_0=-\lambda(t)\Big(1-\frac1{2t}\Big),\; t \text{ odd}.
  \end{gather*}
To argue that $h(i)\le -\lambda(i)$ for all $i=1,\dots,n$ we note that $h(i)=\lambda(i)$ for $i$ about $n/2$ (for $t-1\le i\le t+2$ if
$t$ is even and $t-2\le i \le t+1$ if it is odd). The other conditions are confirmed by using the ``convexity'' of $-\lambda(i)$, relation \eqref{eq:monotone} and $K_1^{(n)}(x)=n-2x, K_n^{(n)}(l)=(-1)^l,l=0,1,\dots,n;$  we omit the details.
Finally, \eqref{eq:LPsimplex} is obtained by substituting $h_0$ and $h(0)=h_0+h_1(n+1)$ into \eqref{eq:energybound} and simplifying.

Now let us prove \eqref{eq:LPHamming}. For this, we take a polynomial of the following form:
   $$
  h(w)=-\lambda(w)+ m_n 2^n\delta_0(w)=-\sum_{k=0}^n \hat\lambda_k K_k(w)+m_n\sum_{k=0}^n K_k(w), \quad w=0,\dots,n, 
    $$
where $m_n:=\max_{1\le k\le n}\hat\lambda_k$ and the expansion of $\delta_0(w)$ is given in \eqref{eq:deltawi}.
By construction we have that the coefficients $h_k=m_n-\hat\lambda_k\ge 0,k=1,\dots,n$ and $h(w)=-\lambda(w), w=1,\dots,n.$ 
Further,
  $$
  h(0)=2^n m_n, \quad h_0=m_n-\hat\lambda_0=m_n-\Lambda_n,
  $$
and thus
  $$
  E_\lambda(n,N)\le h(0)-N h_0=2^n m_n-N(m_n-\Lambda_n)=N\Lambda_n+(2^n-N)m_n.  
  $$
Now \eqref{eq:LPHamming} follows upon substituting $m_n$ from Lemma \ref{lemma:monmid}.
\end{proof}

Using \eqref{eq:E-D}, we obtain the following bounds on discrepancy.

\begin{corollary}\label{cor:bounds} For any $N\ge 1$
    \begin{equation}\label{eq:B2}
    D^{L_2}(n,N)\ge \Lambda_n-\frac{N-1}N\lambda(n).
    \end{equation}
For $n=2t-1, N\ge 1$
   \begin{equation}\label{eq:Bsimplex}
   D^{L_2}(n,N)\ge \begin{cases}
     \Lambda_n-\frac{2N-1}{2N}\lambda(t) &t\text{ even}\\[.1in]
     \Lambda_n-\frac{Nn-(n-1)/2}{N(n+1)}\lambda(t) &t\text{ odd}.
     \end{cases}
   \end{equation}
For any $N\ge 1$
 \begin{equation}\label{eq:DHm}
  D^{L_2}(n,N)\ge \begin{cases}
   -(\frac{2^n}{N}-1)\hat\lambda_{\frac{n}2}, &n\text{ even}\\[.07in]
  -(\frac{2^n}{N}-1)\hat\lambda_{\frac{n+1}2}, &n\text{ odd}.
  \end{cases}
  \end{equation}
\end{corollary}
We point out that bounds \eqref{eq:N} and \eqref{eq:LPHamming} can be also obtained directly from \eqref{eq:SH3} and \eqref{eq:Ddual}, respectively, using the fact that $\lambda(w)$ is monotone nondecreasing as a function of $w$ and that the middle coefficient $\hat\lambda_{\frac{n}2}$
(or $\hat\lambda_{\frac{n+1}2}$) is the largest among the Krawtchouk coefficients of $\lambda(w).$ The bound \eqref{eq:Bsimplex} does not seem to have an immediate direct proof.

We continue with some examples of discrepancy-minimizing configurations. The examples also show that neither of the bounds \eqref{eq:Bsimplex} and \eqref{eq:DHm} is uniformly better than the other one.

1. The repetition code $Z=(0^n,1^n).$ From \eqref{eq:SH3} we have $D^{L_2}(Z)=\Lambda_n-\frac 12 \lambda(n),$ and $A_n=1$ and $A_k=0$ for $k=1,\dots,n-1.$ This matches the bound \eqref{eq:B2} with equality. Clearly the only case when the bound \eqref{eq:B2} can be attained is $N=2.$
   
2a. The Hamming code $Z=\cH_m, m\ge2.$ We have $n=2^m-1,N=2^{n-m}.$ Substituting these parameters into \eqref{eq:DHm}, we find
   $$
   D^{L_2}(n,N)=\Lambda_n-\frac 1NE_\lambda(N)\ge -n\hat\lambda_{\frac{n+1}2}
  $$
which exactly matches the value $D^{L_2}(\cH_m)$ given in \eqref{eq:DH}, \eqref{eq:DHF}. This confirms optimality of the Hamming code.

2b. The shortened Hamming code $Z=\cH_m^{\text s}, m\ge 2$. Shortening means taking a half of the codewords in $\cH_m$ that contain zero in some
fixed coordinate and removing this coordinate. This results in a code of length $n=2^{m}-2$ and cardinality $N=2^{n-m}.$ The dual code $Z^\bot$ has 
distance distribution $A_{n/2}^\bot=\frac{n+2}2, A_{(n/2)+1}^\bot=\frac n2,$ and $A^\bot_k=0$ for all other $k\ge 1.$ Substituting into \eqref{eq:Ddual} and using Lemma~\ref{lemma:monmid}, we conclude that $D^{L_2}(Z)$ meets the case of $n$ even in \eqref{eq:LPHamming}. This shows that the shortened Hamming code is a discrepancy minimizer.

Experimentally, also the twice shortened Hamming code is LP-optimal (is a discrepancy minimizer), although it does not meet the bounds in Corollary \ref{cor:bounds}. 

3. The following codes were found to be discrepancy minimizers by computer: 
\begin{enumerate}[label=(\roman*)]
   \item the Golay code with $n=23, N=4096,$
   \item the shortened Golay code,
   \item the twice shortened Golay code,
   \item the quadratic residue code with $n=17, N=512$ (\href{http://oeis.org/A028381}{A028381}) ,
   \item the 2-error-correcting BCH codes with $n=31,N=2^{21}$ and $n=127,N=2^{113}$ and their shortened codes.
\end{enumerate}
 It is likely that the results in (v) extend to all BCH codes of length $n=2^{m}-1, N=2^{n-2m}$ for $m$ odd. In this case, the dual distance distribution has three nonzero coefficients $A_w^\bot$ for $w=2^{m-1}\pm2^{(m-1)/2},w=2^{m-1}$ \cite{mac91}, and one needs to design
 a polynomial that equals $\lambda(w)$ for these values of $w$.
 
As noted in \cite{Barg2020a}, if a code $Z$ is a minimizer of discrepancy, then so is its complement $Z^c:=\cX_n\backslash Z.$

\vspace*{.1in} 
Recall that the repetition codes for odd $n$, codes formed of a single vector,  the Hamming codes,  and the binary Golay code of length 23
exhaust the list of all perfect codes in $\{0,1\}^n$ \cite[Sec.~6.10]{mac91}. This enables us to make the following observation.

\begin{theorem} Perfect codes in $\{0,1\}^n$ are discrepancy-minimizing configurations.
\end{theorem}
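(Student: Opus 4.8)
The plan is to reduce the statement to the known classification of perfect binary codes and then check each class against one of the linear-programming bounds already established, showing in every case that the code attains the minimum $D^{L_2}(n,N)$ over all codes of its cardinality. By \cite[Sec.~6.10]{mac91}, every perfect code in $\{0,1\}^n$ is either a single point $\{x\}$, the repetition code $(0^n,1^n)$ with $n$ odd, a Hamming code $\cH_m$ with $n=2^m-1$, or the binary Golay code with $n=23$. So it suffices to exhibit, for each of these four families, a member that meets a valid LP lower bound on $D^{L_2}(n,N)$ with equality, whence by Proposition~\ref{prop:LP} it is discrepancy-minimizing.

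First I would dispose of the easy families. For the singleton $Z=\{x\}$, formula \eqref{eq:not0} gives $D^{L_2}(\{x\})=0$, which is trivially the minimum over codes of size $1$. For the repetition code, the computation in Example~1 shows that its energy $E_\lambda$ meets the bound \eqref{eq:N} of Theorem~\ref{thm:LPbounds} with equality (here $N=2$, so $(N-1)\lambda(n)=\lambda(n)$), hence $D^{L_2}(Z)=D^{L_2}(n,2)$. For the Hamming code $\cH_m$ I would invoke Example~2: substituting $n=2^m-1$ and $N=2^{n-m}$ into the bound \eqref{eq:DHm} recovers exactly the value $D^{L_2}(\cH_m)=-n\hat\lambda_{(n+1)/2}$ from \eqref{eq:DHF}. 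The key point is that the dual polynomial in \eqref{eq:Hammingcoeff} was built with $h_{(n+1)/2}=0$, and since $A_i^\bot>0$ only for $i=(n+1)/2$, all complementary-slackness conditions of Proposition~\ref{prop:LP} hold, so $\cH_m$ attains $D^{L_2}(n,N)$.

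The remaining, and only genuinely unavoidable, computational step is the binary Golay code with $n=23$, $N=4096$: here there is no closed-form family of LP certificates of the above type, so as indicated in Example~3 one verifies LP-optimality directly, by setting up the primal program \eqref{eq:primal} (or its dual \eqref{eq:dualLP}) with the explicit values $\lambda(w)$ from Lemma~\ref{lemma:lambda} and the known distance distribution of the Golay code \cite{mac91}, and checking that the optimum coincides with $D^{L_2}$ of the code---equivalently, exhibiting a feasible dual polynomial $h$ whose objective $h(0)-Nh_0$ equals $E_\lambda(Z)$. I expect this finite numerical check to be the main obstacle, in the sense that it is the one case not following from the structural facts already in place; everything else is assembled from Examples~1 and~2 and the classification, which together complete the proof.
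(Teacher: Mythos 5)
Your proposal is correct and follows essentially the same route as the paper: invoke the classification of binary perfect codes (singletons, repetition codes for odd $n$, Hamming codes, the length-23 Golay code) and verify each family against an LP certificate — the trivial value $0$ for singletons, the bound \eqref{eq:N} for the repetition code, the bound \eqref{eq:LPHamming1}/\eqref{eq:DHm} matching \eqref{eq:DHF} for the Hamming codes, and a computer-checked LP optimum for the Golay code. This matches the paper's argument, which likewise derives the theorem from these examples together with the classification.
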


If the code is a discrepancy minimizer, its dual is not necessarily a minimizer or even LP-optimal for the discrepancy problem. Indeed, while the 
Hamming codes are optimal, this is not true for their duals, i.e., the Hadamard codes. For instance, the dual code $\cH_3^\bot$ of length $n=7$ has distance
distribution $A_4=7$ and $A_k=0$ for all other $k\ge 1.$ For the class of Hadamard codes we have $n=2^m-1, N=2^m, m\ge 2.$ 
Using the case of even $t$ in \eqref{eq:LPsimplex}, we obtain the bound
   $$
 E_\lambda(n,N)\le \lambda\Big(\frac{n+1}2\Big)(n+1/2)
   $$

\noindent and the code $Z$ spanned over ${\mathbb F}_2$ by $1110000,0011100,0000111$ meets it with equality. It has a strictly smaller
value of discrepancy than $\cH_3^\bot$ (123/32 vs. 315/32), and its  distance distribution vector $(0,0,3,2,1,1,0)$ is optimal for the LP problem. Similar examples can be constructed for larger $m.$

Some of the usual suspects such as the extended Golay code of length $n=24,$ the Nordstrom-Robinson code of length $n=16$ and other codes
in the family of Kerdock codes, Reed-Muller codes of small length, or the subcubes $C_{n-m}$ are not LP optimal. We did not attempt to examine the question whether the distance distributions produced by the linear program \eqref{eq:primal} for the parameters of these examples correspond to actual codes (a priori this is not guaranteed, and some authors resort to the term {\em quasicode} to account for this).

We conclude this section by collecting the bounds on $D^{L_2}(n,N)$ obtained in this paper. The following theorem proves
the inequalities mentioned in the Introduction.
\begin{theorem}\label{prop:bounds}
For large $n$ and $N=o(2^n)$ we have the asymptotic bounds
  \begin{equation}\label{eq:asymp}
    c\frac1{\sqrt{n}}\frac {2^n}{N}\le D^{L_2}(n,N)\le C\sqrt{n}\frac{2^n}{N}
 \end{equation}
for some constants $c,C.$ The discrepancy $D^{L_2}(n,N)$ is bounded away from zero unless $\frac{2^n}{\sqrt n}=o(N).$ If $N=2^{rn}, 0<r<1,$
then 
  $$
  (\log N)^{-1/2}N^\alpha\lesssim D^{L_2}(n,N)\lesssim (\log N)^{1/2}N^\alpha,
  $$
where $\alpha=\frac 1r-1.$  
\end{theorem}
\begin{proof}
The upper bound follows from Proposition \ref{prop:random} upon using \eqref{eq:center}. In the same way, the lower bound in \eqref{eq:asymp}
follows from \eqref{eq:DHm} and Lemma \ref{lemma:monmid}.
\end{proof}

\section{Extensions and open questions}\label{sec:open}
\subsection{Generalized Stolarsky's identities} We begin with a simple generalization of Theorem~\ref{thm:SG}, obtained when the definition of $D^{L_2}(Z)$ in \eqref{eq:D} is extended to a weighted sum. Define
   \begin{equation*}
   D^{L_2}_G(Z)=\sum_{t=0}^n g_t (D_{t}(Z))^2,
   \end{equation*}
where $G=(g_0,g_1,\dots,g_n)$ is a real vector and $D_t(Z)$ is given in \eqref{eq:DG}. The corresponding weighted version of the
invariance principle is given in the next theorem.
%
%
  \begin{theorem} Let $\cX$ be a finite metric space, let $Z\subset {\cX}$ be a subset of size $N,$ and let $g_i\ge 0, i=0,1,\dots,n$ and $\gamma(t):=\sum_{i=t}^n g_i.$ Then
   \begin{equation}\label{eq:GSP}
   D_G^{L_2}(Z)=\langle \lambda_G\rangle_{\cX}-\langle \lambda_G\rangle_{Z},
   \end{equation}
where for $x,y\in {\cX}$
  $$
  \lambda_G(x,y):=\frac12 \sum_{z\in {\cX}}|\gamma(d(x,z))-\gamma(d(y,z))|.
  $$   
  \end{theorem}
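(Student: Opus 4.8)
The plan is to mimic the proof of Theorem~\ref{thm:SG} almost verbatim, replacing the single distance kernel by its $\gamma$-weighted analogue. First I would multiply the identity \eqref{eq:cap1} by $g_t$ and sum over $t$: writing $\mu_t(x,y)=|B(x,t)\cap B(y,t)|$ (cf.~\eqref{eq:mut}) and $\mu_G(x,y):=\sum_{t=0}^n g_t\mu_t(x,y)$, this gives at once
\[
D_G^{L_2}(Z)=\langle\mu_G\rangle_Z-\frac1{|{\cX}|}\sum_{t=0}^n g_t|B(u,t)|^2
\]
for any fixed $u\in{\cX}$, where $\langle\mu_G\rangle_Z=\tfrac1{N^2}\sum_{i,j}\mu_G(z_i,z_j)$. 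So everything reduces to identifying $\mu_G$ with the kernel $\lambda_G$ up to an additive constant, and then evaluating that constant and the last term by computing $\langle\lambda_G\rangle_{\cX}$.

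The heart of the argument is the rewriting of $\mu_G$. Expanding each ball intersection as $\sum_z\mathbbm{1}_{B(x,t)}(z)\mathbbm{1}_{B(y,t)}(z)$ and interchanging the sums over $z$ and $t$ gives $\mu_G(x,y)=\sum_{z\in{\cX}}\sum_{t\ge\max(d(x,z),d(y,z))}g_t=\sum_{z\in{\cX}}\gamma\bigl(\max(d(x,z),d(y,z))\bigr)$. This is precisely where the hypothesis $g_i\ge 0$ enters: it makes $\gamma$ non-increasing, hence $\gamma(\max(a,b))=\min(\gamma(a),\gamma(b))=\tfrac12\bigl(\gamma(a)+\gamma(b)-|\gamma(a)-\gamma(b)|\bigr)$, and therefore $\mu_G(x,y)=\Gamma-\lambda_G(x,y)$, where $\Gamma:=\sum_{z\in{\cX}}\gamma(d(x,z))=\sum_{t=0}^n g_t|B(x,t)|$ is independent of $x$ by distance-invariance. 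When $g_t\equiv 1$ (so $\gamma(t)=n+1-t$ and $\lambda_G=\lambda$) this reduces to the computation \eqref{eq:cap2}.

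Finally I would average the relation $\mu_G=\Gamma-\lambda_G$ over $Z$ and over ${\cX}\times{\cX}$. Averaging over $Z$ gives $\langle\mu_G\rangle_Z=\Gamma-\langle\lambda_G\rangle_Z$. For the global average one needs $\sum_{x,y\in{\cX}}\mu_t(x,y)=\sum_{z\in{\cX}}|B(z,t)|^2=|{\cX}|\,|B(u,t)|^2$, which follows by swapping the order of summation and using $z\in B(x,t)\iff x\in B(z,t)$; hence $\langle\mu_G\rangle_{\cX}=\frac1{|{\cX}|}\sum_t g_t|B(u,t)|^2$ and so $\langle\lambda_G\rangle_{\cX}=\Gamma-\frac1{|{\cX}|}\sum_t g_t|B(u,t)|^2$. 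Plugging these into the first displayed formula makes both $\Gamma$ and $\frac1{|{\cX}|}\sum_t g_t|B(u,t)|^2$ cancel, leaving \eqref{eq:GSP}. I do not expect any real obstacle: the only genuinely new ingredient beyond Theorem~\ref{thm:SG} is the monotonicity of $\gamma$ that converts $\max$ into $\min$ inside $\gamma$, and the rest is the same bookkeeping; the one thing to watch is that the additive constants cancel correctly — a useful sanity check being to confirm the orientation of the difference $\langle\lambda_G\rangle_{\cX}-\langle\lambda_G\rangle_Z$ against the $g_t\equiv 1$ case \eqref{eq:kernel}.
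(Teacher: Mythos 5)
Your plan follows the paper's own strategy step for step (weight \eqref{eq:cap1} by $g_t$, sum over $t$, redo the computation \eqref{eq:cap2} with $\gamma$ in place of $n+1-d$, and average), and every intermediate identity you state is correct: since $g_t\ge 0$ makes $\gamma$ non-increasing, indeed $\sum_{t\ge \max(d(x,z),d(y,z))}g_t=\gamma\bigl(\max(d(x,z),d(y,z))\bigr)=\min\bigl(\gamma(d(x,z)),\gamma(d(y,z))\bigr)$, hence $\mu_G=\Gamma-\lambda_G$ with $\Gamma=\sum_t g_t|B(u,t)|$, and $\frac1{|{\cX}|}\sum_t g_t|B(u,t)|^2=\langle\mu_G\rangle_{\cX}$ by the same ball-swapping argument as in \eqref{eq:sq1}.

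The one flaw is your final sentence: substituting $\langle\mu_G\rangle_Z=\Gamma-\langle\lambda_G\rangle_Z$ and $\frac1{|{\cX}|}\sum_t g_t|B(u,t)|^2=\Gamma-\langle\lambda_G\rangle_{\cX}$ into your first display gives
\[
D_G^{L_2}(Z)=\langle\lambda_G\rangle_{\cX}-\langle\lambda_G\rangle_Z,
\]
i.e.\ the averages in the opposite order from \eqref{eq:GSP}, so the cancellation does not "leave \eqref{eq:GSP}" as you assert. The sanity check you mention but do not carry out settles the orientation: for $g_t\equiv1$ one has $\gamma(t)=n+1-t$ and $\lambda_G=\lambda$, and \eqref{eq:kernel} reads $D^{L_2}(Z)=\langle\lambda\rangle_{\cX}-\langle\lambda\rangle_Z$, matching your computation rather than the displayed statement. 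The divergence from the paper is localized in one step: in \eqref{eq:Gcap2} the paper replaces $\sum_{t\ge\max}g_t$ by $n+1-\min\bigl(\gamma(d(z,x)),\gamma(d(z,y))\bigr)$, which introduces a spurious $|{\cX}|(n+1)$ term and flips the sign in front of $\lambda_G$; your identification $\gamma(\max(a,b))=\min(\gamma(a),\gamma(b))$ is the correct one (the paper's substitution already fails for $g_t\equiv1$). So your argument is sound and in fact establishes the corrected, sign-reversed form of the theorem; fix the write-up by stating the conclusion as $\langle\lambda_G\rangle_{\cX}-\langle\lambda_G\rangle_Z$ and flagging that \eqref{eq:GSP} as printed has the difference reversed.
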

\begin{proof} The proof is close to the proof of Theorem \ref{thm:SG}. Similarly to \eqref{eq:cap1} we obtain
  \begin{equation}\label{eq:DGt}
  \sum_{t=0}^n g_t D_t(Z)^2=\frac 1{N^2}\sum_{i,j=1}^N \sum_{t=0}^n g_t|B(z_i,t)\cap B(z_j,t)| - \frac 1{|{\cX}|}\sum_{t=0}^n g_t |B(u,t)|^2 .
  \end{equation}
Since $g_i\ge 0$ for all $i$, the function $\gamma(t)$ is monotone
nonincreasing, and the calculation in \eqref{eq:cap2} takes the following form:
      \begin{align}
      \sum_{t=0}^n g_t|B(x,t)\cap B(y,t)|&=\sum_{t=0}^n g_t \sum_{z\in {\cX}} \mathbbm{1}_{B(x,t)}(z)\mathbbm{1}_{B(y,t)}(z)=
        \sum_{z\in{\cX}}\sum_{t=0}^n g_t \mathbbm{1}_{B(x,t)}(z)\mathbbm{1}_{B(y,t)}(z)\nonumber\\
        &=\sum_{z\in{\cX}}\sum_{t=\max(d(z,x),d(z,y))}^n g_t=\sum_{z\in {\cX}}\gamma(\max(d(z,x))(d(z,y)))\nonumber\\
        &=\sum_{z\in {\cX}}\min(\gamma(d(z,x)),\gamma(d(z,y)))\nonumber\\
        &=\sum_{z\in {\cX}}\frac12\{\gamma(d(z,x))+\gamma(d(z,y))-|\gamma(d(z,x))-\gamma(d(z,y))|\},\label{eq:Gcap2}
  \end{align}
where $u$ is any fixed point in $\cX.$ Similarly, Eq.~\eqref{eq:sq1} takes the form
  \begin{equation}\label{eq:Gsq1}
   \sum_{t=0}^ng_t|B(u,t)|^2=\frac1{|{\cX}|}\sum_{x,y\in{\cX}}\sum_{z\in \cX}
    \frac 12\{\gamma(d(x,z))+\gamma(d(y,z))-|\gamma(d(x,z))-\gamma(d(y,z))|\}.
  \end{equation}
Using \eqref{eq:Gsq1} and \eqref{eq:Gcap2} in \eqref{eq:DGt} finishes the proof.  
\end{proof}
 For connected spaces such as $S^d$, weighed versions of the invariance principle 
were earlier considered in \cite{Brauchart2013,Skriganov2020,Skriganov2019}.
Relation \eqref{eq:GSP} applies to any finite metric space. In the case of the Hamming space ${\cX_n}=\{0,1\}^n,$ we can write \eqref{eq:GSP} in a more
specific form relying on the results of the previous sections.  Rewriting \eqref{eq:Gcap2} we obtain
  \begin{equation}\label{eq:lambdaG}
  \lambda_G(x,y)=\sum_{z\in \cX}\gamma(d(z,u))-\sum_{t=0}^n g_t\mu_t(x,y),
  \end{equation}
where $\mu_t$ is given in \eqref{eq:mut}. One implication of this equality is a Krawtchouk expansion of the kernel $\lambda_G$.
\begin{proposition} The Krawtchouk expansion of the kernel $\lambda_G(x,y)$ has the form
  $$
  \lambda_G(w)=\sum\nolimits_{k=0}^n \hat\lambda_{G,k} K_k^{(n)}(w),
  $$
where $w=d(x,y),\;\hat\lambda_{G,0}=\langle\lambda_G\rangle_{\cX}$ and 
  $$
  \hat\lambda_{G,k}=-2^{-n}\sum\nolimits_{t=0}^{n-1}(K_t^{(n-1)}(k-1))^2 g_t, \quad k=1,\dots,n.
  $$
\end{proposition}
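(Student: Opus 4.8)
The plan is to follow the same convolution-and-orthogonality mechanism already established for the unweighted kernel, feeding it the formula for $\lambda_G$ derived just before the proposition. First I would start from the identity
\[
\lambda_G(x,y)=\sum_{t=0}^n g_t\mu_t(x,y)+\sum_{z\in\cX}\gamma(d(z,u))-2^n(n+1),
\]
which is already in hand, and observe that the last two terms are constants independent of $x,y$; hence they contribute only to the degree-zero Krawtchouk coefficient. So it remains to expand $\sum_t g_t\mu_t(x,y)$ in the Krawtchouk basis.

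Next I would invoke Lemma~\ref{thm:Bt}, which gives $\mu_t(x,y)=2^{-n}\sum_{k=0}^n c_k(t)^2 K_k^{(n)}(w)$ with $c_0(t)=\sum_{i=0}^t\binom ni$ and $c_k(t)=K_t^{(n-1)}(k-1)$ for $1\le k\le n$, $t\le n-1$, and $c_k(n)=0$ for $k\ge 1$. Multiplying by $g_t$ and summing on $t$ exchanges the order of summation to give
\[
\sum_{t=0}^n g_t\mu_t(x,y)=2^{-n}\sum_{k=0}^n\Big(\sum_{t=0}^n g_t c_k(t)^2\Big)K_k^{(n)}(w),
\]
and since $c_k(n)=0$ for $k\ge 1$ the inner sum over $t$ for $k\ge 1$ truncates at $n-1$, producing exactly $\sum_{t=0}^{n-1}(K_t^{(n-1)}(k-1))^2 g_t$. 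Dividing by $2^{-n}$ — or rather keeping the $2^{-n}$ — yields $\hat\lambda_{G,k}=2^{-n}\sum_{t=0}^{n-1}(K_t^{(n-1)}(k-1))^2 g_t$ for $k\ge 1$, as claimed. The degree-zero coefficient then collects the $k=0$ term $2^{-n}\sum_{t=0}^n g_t c_0(t)^2$ together with the constants, and by the general principle that the constant Fourier coefficient of a kernel is its average value (Remark~1 before Lemma~\ref{lemma:monmid}), this must equal $\langle\lambda_G\rangle_{\cX}$; alternatively one can verify it directly using the weighted analogues \eqref{eq:Gcap2}, \eqref{eq:Gsq1} of the closed-form summation identities, just as in the proof of Corollary~\ref{cor:Klambda}.

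There is essentially no hard step here: the entire argument is a weighted repackaging of Lemma~\ref{thm:Bt} and Corollary~\ref{cor:Klambda}, with $g_t$ carried along as an inert weight through the interchange of summations. The only point requiring a small amount of care is the identification of the constant term $\hat\lambda_{G,0}$ with $\langle\lambda_G\rangle_{\cX}$ rather than leaving it as the raw expression $2^{-n}\sum_t g_t c_0(t)^2+\sum_z\gamma(d(z,u))-2^n(n+1)$; this follows either from the averaging interpretation of the zeroth Krawtchouk coefficient or from a direct computation mirroring the one in Corollary~\ref{cor:Klambda}. Accordingly the proof can be stated in a few lines.

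\begin{proof}
By the displayed formula for $\lambda_G$ preceding the statement, $\lambda_G(x,y)$ differs from $\sum_{t=0}^n g_t\mu_t(x,y)$ by a constant independent of $x,y$, which therefore affects only the degree-zero Krawtchouk coefficient. Applying Lemma~\ref{thm:Bt} and summing the expansion $\mu_t(x,y)=2^{-n}\sum_{k=0}^n c_k(t)^2K_k^{(n)}(w)$ against $g_t$, we interchange the order of summation to get
\[
\sum_{t=0}^n g_t\mu_t(x,y)=2^{-n}\sum_{k=0}^n\Big(\sum_{t=0}^n g_t\, c_k(t)^2\Big)K_k^{(n)}(w).
\]
For $k\ge 1$ we have $c_k(n)=0$ and $c_k(t)=K_t^{(n-1)}(k-1)$ for $t\le n-1$ by \eqref{eq:1n}, so the inner sum equals $\sum_{t=0}^{n-1}(K_t^{(n-1)}(k-1))^2g_t$, giving $\hat\lambda_{G,k}=2^{-n}\sum_{t=0}^{n-1}(K_t^{(n-1)}(k-1))^2g_t$ for $k\ge 1$. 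Finally, the zeroth Krawtchouk coefficient of any kernel on ${\cX}\times{\cX}$ equals its average value over ${\cX}$ (cf.\ the remark preceding Lemma~\ref{lemma:monmid}), so $\hat\lambda_{G,0}=\langle\lambda_G\rangle_{\cX}$; this can also be checked directly from \eqref{eq:Gcap2} and \eqref{eq:Gsq1} in the same manner as in the proof of Corollary~\ref{cor:Klambda}.
\end{proof}
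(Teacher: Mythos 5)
Your proof is correct and follows essentially the same route as the paper, which establishes the proposition in exactly this way: it takes the displayed identity expressing $\lambda_G(x,y)$ as $\sum_{t}g_t\mu_t(x,y)$ plus an $(x,y)$-independent constant, applies the expansion of Lemma~\ref{thm:Bt} with the weight $g_t$ carried through the sum on $t$ (using $c_k(n)=0$ for $k\ge1$), and identifies the constant coefficient with $\langle\lambda_G\rangle_{\cX}$ as in the remark before Lemma~\ref{lemma:monmid}. No gap to report.
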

\begin{proof} Eq. \eqref{eq:lambdaG} implies that $\lambda_G(x,y)$ is a radial kernel on $\cX_n,$ and thus $\hat\lambda_{G,0}$ is the
average value of $\lambda_G$ on the space $\cX$ by definition. The coefficients $\hat\lambda_{G,k}, k\ge 1$ follow by a direct substitution from
Lemma~\ref{thm:Bt}.
\end{proof}

The weighted version of Stolarsky's invariance principle for the Hamming space can be written in the following form.
\begin{theorem}[{\sc Weighted Stolarsky's invariance}] Let $Z\subset \cX_n=\{0,1\}^n$ be a code with distance distribution $A(Z)$ and dual distance distribution $A^\bot(Z).$ Then
  \begin{align*}
      D_G^{L_2}(Z)&=\langle \lambda_G\rangle_{\cX}-\frac 1N\sum_{w=1}^n A_w\lambda_G(w)\\
      &=\frac1{2^n}\sum_{k=1}^n A_k^\bot \sum_{t=0}^{n-1}g_t(K_t^{(n-1)}(k-1))^2.
   \end{align*}
\end{theorem}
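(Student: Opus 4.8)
The plan is to obtain both lines of the identity by combining three ingredients already in hand: the general weighted invariance principle \eqref{eq:GSP}, the Krawtchouk expansion of $\lambda_G$ derived in the proposition just above, and the MacWilliams transform \eqref{eq:MW1}. No new estimate is needed; the work is entirely bookkeeping.

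First I would establish the first line. Specializing \eqref{eq:GSP} to $\cX=\{0,1\}^n$ gives $D_G^{L_2}(Z)=\langle\lambda_G\rangle_Z-\langle\lambda_G\rangle_{\cX}$, so it suffices to rewrite $\langle\lambda_G\rangle_Z$ through the distance distribution. Since $\lambda_G(x,y)$ depends only on $w=d(x,y)$ and vanishes on the diagonal (by definition $\lambda_G(x,x)=\frac12\sum_z|\gamma(d(x,z))-\gamma(d(x,z))|=0$), grouping the $N^2$ ordered pairs of $Z$ by Hamming distance yields
\[
\langle\lambda_G\rangle_Z=\frac1{N^2}\sum_{w=0}^n (NA_w)\,\lambda_G(w)=\frac1N\sum_{w=1}^n A_w\lambda_G(w),
\]
the $w=0$ term dropping because $\lambda_G(0)=0$. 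This is the first displayed equality.

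Next I would pass to the transform side. Substituting the expansion $\lambda_G(w)=\sum_{k=0}^n\hat\lambda_{G,k}K_k^{(n)}(w)$ into the expression just obtained, reinstating the harmless $w=0$ term, and interchanging the two finite sums gives
\[
\frac1N\sum_{w=1}^n A_w\lambda_G(w)=\sum_{k=0}^n\hat\lambda_{G,k}\Big(\frac1N\sum_{w=0}^n A_w K_k^{(n)}(w)\Big)=\sum_{k=0}^n\hat\lambda_{G,k}A_k^\bot,
\]
where the last step is the MacWilliams identity \eqref{eq:MW1}. It remains to peel off the $k=0$ term: we have $A_0^\bot=1$ and, from the proposition above, $\hat\lambda_{G,0}=\langle\lambda_G\rangle_{\cX}$, so the $k=0$ contribution is exactly $\langle\lambda_G\rangle_{\cX}$. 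Subtracting it and inserting the explicit value $\hat\lambda_{G,k}=2^{-n}\sum_{t=0}^{n-1}g_t\,(K_t^{(n-1)}(k-1))^2$ for $k\ge1$ produces the second displayed equality.

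I do not expect a genuine obstacle. The only points that need care are the sign convention carried over from \eqref{eq:GSP}, which (unlike the unweighted \eqref{eq:kernel}) lists $\langle\lambda_G\rangle_Z$ first because $\gamma$ is nonincreasing, the vanishing of $\lambda_G$ on the diagonal, and the isolation of the constant Krawtchouk coefficient $\hat\lambda_{G,0}=\langle\lambda_G\rangle_{\cX}$, which is precisely the term that cancels against $-\langle\lambda_G\rangle_{\cX}$.
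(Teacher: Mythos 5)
Your derivation is correct and is exactly the route the paper intends: the theorem is stated there without a separate proof, being the immediate combination of the weighted invariance principle \eqref{eq:GSP}, the vanishing of $\lambda_G$ on the diagonal (so $\langle\lambda_G\rangle_Z=\frac1N\sum_{w\ge1}A_w\lambda_G(w)$), the Krawtchouk coefficients $\hat\lambda_{G,k}$ from the preceding proposition, and the MacWilliams identity \eqref{eq:MW1}, with the $k=0$ term $\hat\lambda_{G,0}A_0^\bot=\langle\lambda_G\rangle_{\cX}$ cancelling exactly as you say. No gap; nothing further is needed.
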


A further generalization, discussed in \cite{Bilyk2018,Brauchart2013}, 
suggests to replace the indicator function of the metric ball in \eqref{eq:DG} with an arbitrary radial function $f(d(x,y)):\{0,\dots,n\}\to \reals.$  In other words, rather than starting with the deviation from
the uniform distribution for spheres in ${\cX},$ we build the notion of discrepancy starting with a function on ${\cX}$. 
For a subset $Z\in {\cX}$ define 
   $$
   D^{L_2}_f(Z)=\sum_{x\in{\cX}}\Big(\frac1N\sum_{j=1}^N f(d(x,z_j))-\frac 1{|{\cX}|}\sum_{u\in {\cX}} f(d(x,u))\Big)^2.
   $$
Upon squaring on the right-hand side, this function gives rise to a radial kernel 
  $$F(x,y)=\sum_{z\in \cX}f(d(x,z))f(d(z,y)),
  $$ 
cf. \eqref{eq:conv}, 
and the corresponding version of the invariance principle expresses $D$ as a difference between the average of $F$ over ${\cX}$ 
and the average over $Z.$ This approach can start with either $f$ or $F$, where each of the options has its 
own benefits. On the one hand, the function $F$ corresponds to a potential on ${\cX}$ (as in \eqref{eq:f}) and is related to the
geometric nature of the problem. At the same time, it is not always easy to find $f$ given $F$, although it is $f$ that is required
to define the discrepancy. 
On the other hand, starting with $f$, we can express
$F$ as a convolution, which implies that $F$ is a positive-definite radial kernel on ${\cX}.$ 
This enables one to study the problem of discrepancy minimization by linear programming as discussed in the previous section in a special case.

\subsection{Metric association schemes} Suppose that the finite set ${\cX}$ supports the structure of a commutative association scheme with $d$ classes $R_1,\dots,R_d$ whereby the distance $d(x,y)$ is replaced by the value $j$ such that $(x,y)\in R_j.$ It is easy to extend the definition
of discrepancy and prove a corresponding version of the invariance principle to this setting. We briefly discuss one special case which 
enables one to simplify the invariance principle \eqref{eq:SG}. Namely, suppose that the association scheme on ${\cX}\times{\cX}$ is {\em metric}, i.e., 
the pair $({\cX},R_1)$ where $R_1=:\{(x,y)\in {\cX}^2|d(x,y)=1\}$
forms a distance-regular graph \cite{bro89}. Let $R_j:=\{(x,y)|d(x,y)=j\}, j=1,\dots,n$ be the set of pairs that have distance $j$ in the graph, where $n$ is the diameter of the graph, and let $R_0:=\{(x,x)|x\in {\cX}\}$. The defining property of the association scheme is as follows: the number
    \begin{equation}\label{eq:int}
    p_{ij}(x,y)=|\{u\in {\cX}|d(x,u)=i,d(y,u)=j\}|
    \end{equation}
depends only on the distance $d(x,y).$ If $d(x,y)=k,$ then this number is denoted by $p_{ij}^k$ and is called the intersection 
number of the scheme. In particular, the numbers $n_i:=p_{ii}^0=|\{u\in {\cX}|d(u,x)=i\}|$, which do not depend on $x$, are called the valencies of the 
scheme. For a metric association scheme $A({\cX},R_0,R_1,\dots,R_m)$ the numbers $p_{ij}^k\ne 0$ only if $|i-j|\le k\le i+j,$ which guarantees that
the triangle inequalities are satisfied. A large number of metric association schemes are known in the literature \cite{bro89}. 

For metric schemes, the general Stolarsky identity \eqref{eq:SG} admits certain simplifications.
\begin{theorem} Let $Z\subset {\cX},|Z|=N$ be a code in a metric association scheme ${\cX}$ of diameter $n$, and let 
$A_k=\frac 1N|\{(z_1,z_2)\in Z^2\mid (z_1,z_2)\in R_k\}|, k=1,\dots,n.$ Then
    \begin{equation}\label{eq:metric}
    D^{L_2}(Z)=\frac12\Big(\frac 1{|{\cX}|}\sum_{i,j=0}^n\sum_{k=|i-j|}^{i+j}n_kp_{ij}^k|i-j|-\frac1{N}\sum_{k=1}^n A_k\sum_{\begin{substack}{i,j\\|i-j|\le k\le i+j}\end{substack}} p_{ij}^k |i-j|\Big).
    \end{equation}
\end{theorem}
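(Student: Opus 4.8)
The plan is to obtain \eqref{eq:metric} as a direct specialization of the general invariance principle of Theorem~\ref{thm:SG}, in its concise kernel form \eqref{eq:kernel}: it suffices to evaluate the two average values $\langle\lambda\rangle_{{\cX}}$ and $\langle\lambda\rangle_Z$ of the kernel $\lambda$ from \eqref{eq:lambda0}, now with the Hamming distance read as the relation index of the metric scheme. The whole point is that on a metric association scheme $\lambda(x,y)$ depends only on the relation class $k$ containing $(x,y)$, and that class function is exactly the weighted sum of intersection numbers occurring in \eqref{eq:metric}.

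First I would fix a pair $(x,y)\in R_k$ and partition ${\cX}$ according to the ordered pair $(i,j)$ of relation classes that a point $u\in{\cX}$ occupies relative to $x$ and $y$, i.e.\ $d(x,u)=i$ and $d(y,u)=j$. By the defining property \eqref{eq:int} of the scheme, the block indexed by $(i,j)$ has cardinality $p_{ij}^{k}$, which in a metric scheme is nonzero only when $|i-j|\le k\le i+j$. Since $|d(x,u)-d(y,u)|=|i-j|$ on that block, summing over $u$ and halving gives
\begin{equation*}
\lambda(x,y)=\lambda(k)=\tfrac12\sum_{i,j\,:\,|i-j|\le k\le i+j}p_{ij}^{k}\,|i-j|,
\end{equation*}
a function of $k=d(x,y)$ alone, as it must be. This is the only place distance-regularity is genuinely used.

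Next I would assemble the two averages. For $\langle\lambda\rangle_{{\cX}}=|{\cX}|^{-2}\sum_{x,y\in{\cX}}\lambda(d(x,y))$ I group the ordered pairs $(x,y)$ by their relation class; since from any fixed $x$ exactly $n_k$ vertices lie in $R_k$, there are $|{\cX}|\,n_k$ such pairs, whence $\langle\lambda\rangle_{{\cX}}=|{\cX}|^{-1}\sum_{k}n_k\lambda(k)$. For $\langle\lambda\rangle_Z=N^{-2}\sum_{i,j=1}^{N}\lambda(d(z_i,z_j))$ the same grouping together with the definition $A_k=\tfrac1N|\{(z_1,z_2)\in Z^2\mid (z_1,z_2)\in R_k\}|$ gives $\langle\lambda\rangle_Z=N^{-1}\sum_{k}A_k\lambda(k)$. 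Substituting the formula for $\lambda(k)$ from the previous step into $D^{L_2}(Z)=\langle\lambda\rangle_{{\cX}}-\langle\lambda\rangle_Z$, using $\lambda(0)=0$ to start the $Z$-sum at $k=1$, and exchanging the order of summation in the first term (so that one sums over $(i,j)$ with $k$ ranging over $|i-j|\le k\le i+j$), one arrives at \eqref{eq:metric}.

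The computation is essentially bookkeeping, so I do not expect a real obstacle; the two points requiring care are recognizing that the partition of ${\cX}$ by the pair of relation classes to $x$ and $y$ is precisely what the intersection numbers \eqref{eq:int} enumerate, and keeping the normalization straight when passing between sums over ordered pairs, the valencies $n_k$, and the distance distribution $A_k$, so as to match the constants in \eqref{eq:metric}.
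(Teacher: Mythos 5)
Your argument is essentially the paper's own proof: the paper likewise fixes a pair $(x,y)\in R_k$, partitions ${\cX}$ by the pair of relations $(i,j)$ that a point $u$ bears to $x$ and $y$, so that $\sum_{u\in{\cX}}|d(x,u)-d(y,u)|=\sum_{i,j}p_{ij}^k|i-j|$, and then averages over ${\cX}\times{\cX}$ (via the valencies $n_k$) and over $Z\times Z$ (via $A_k$) before inserting the result into \eqref{eq:SG}; your use of the kernel form \eqref{eq:kernel} is only a cosmetic repackaging of the same computation. One caveat about the last step: substituting $\lambda(k)=\tfrac12\sum_{i,j}p_{ij}^k|i-j|$ into $D^{L_2}(Z)=\langle\lambda\rangle_{\cX}-\langle\lambda\rangle_Z$ literally yields the right-hand side of \eqref{eq:metric} with an additional factor $\tfrac12$ on both terms, so you do not quite ``arrive at \eqref{eq:metric}'' as printed --- the $\tfrac12$ coming from \eqref{eq:lambda0} (equivalently, the prefactor in \eqref{eq:SG}) does not cancel. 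This mismatch is not a defect of your bookkeeping: the paper's own proof passes from the same intermediate identities to \eqref{eq:metric} without tracking that factor, and a quick sanity check ($n=1$, ${\cX}=\{0,1\}$, $Z$ a single point, where $D^{L_2}(Z)=\tfrac12$ while the displayed right-hand side of \eqref{eq:metric} equals $1$) shows the stated formula should carry $\tfrac1{2|{\cX}|}$ and $\tfrac1{2N}$. So your derivation is correct and matches the paper's route; just state the conclusion with the factors $\tfrac12$ in place rather than claiming the printed \eqref{eq:metric} verbatim.
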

\begin{proof}
The proof amounts to a direct calculation:
  \begin{align*}
   \sum_{x,y\in{\cX}}\sum_{u\in{\cX}}|d(x,u)-d(y,u)|&=\sum_{x\in{\cX}}\sum_{k=0}^n\sum_{y:(x,y)\in R_k}\sum_{\begin{substack}{i,j\\|i-j|\le k\le i+j}\end{substack}}
     \sum_{\begin{substack}{u\in {\cX}\\(u,x)\in R_i,(u,y)\in R_j}\end{substack}}|i-j|\\
     &=\sum_{x\in{\cX}}\sum_{k=0}^n \sum_{y:(x,y)\in R_k}\sum_{i,j=0}^n p_{ij}^k|i-j|\\
     &=\sum_{x\in{\cX}}\sum_{i,j=0}^n\sum_{k=|i-j|}^{i+j} n_k p_{ij}^k|i-j|.
   \end{align*}
 The sums on $i,j,k$ on the last line do not depend on $x,$ which proves the equality
       \begin{equation*}
   \frac1{|{\cX}|^2}\sum_{x,y\in{\cX}}\sum_{u\in{\cX}}|d(x,u)-d(y,u)|=\frac 1{|{\cX}|}\sum_{i,j=0}^n\sum_{k=|i-j|}^{i+j}n_kp_{ij}^k|i-j|.
     \end{equation*}
Suppose that $d(x,y)=k.$ As a part of the above calculation we have shown that
   \begin{gather*}
   \sum_{u\in {\cX}}|d(x,u)-d(y,u)|=\sum_{\begin{substack}{i,j\\|i-j|\le k\le i+j}\end{substack}} p_{ij}^k |i-j|.
   \end{gather*}
 Using these results in \eqref{eq:SG} finishes the proof.
\end{proof}   
   
By definition, the kernel $\mu_t$  considered above in \eqref{eq:mut}, can be expressed via the intersection numbers as follows:
$\mu_t(x,y)=\sum_{i,j=0}^t p_{ij}^w,$ where $w=d(x,y).$ This expression as well as \eqref{eq:metric} apply to all metric schemes; however, they do
not always lead to a simplified evaluation of $D^{L_2}(Z)$. For instance, in the calculations for the Hamming space we found it more convenient to argue from the first principles.

\subsection{Other related problems} 
Among natural extensions of the problems considered in this paper we mention working out the details of the invariance principle in
the nonbinary Hamming space and in the Johnson space (the set of all binary $n$-vectors of a fixed Hamming weight). While the former
likely is similar to the binary case, the latter may reveal interesting combinatorial connections. As another problem of interest we 
mention studying $L_p$ discrepancies for finite spaces, as has been recently done for the spherical case in \cite{skriganov2018bounds}. Bounds
for $L_p$ discrepancies in the Hamming space were obtained in a follow-up work \cite{Barg2020a}.

While we found some examples of discrepancy-minimizing configurations in the Hamming space, we did not attempt to exhaust or classify
codes that minimize quadratic discrepancy. This question in general does not look easy, and we do not have intuitively appealing
conjectures regarding the minimizers. A related problem is to further study relations between codes that have the smallest sum of
distances and codes with the smallest discrepancy in the Hamming space.

It is also possible to replace metric balls in the space with other geometric shapes in the definition of discrepancy, establishing a corresponding version of invariance. For the spherical case this leads to interesting results when spherical caps of all radii are replaced by the hemispheres or spherical wedges \cite{BilykDai2019,Bilyk2018,Skriganov2017}. Bounds on discrepancy for hemispheres in the Hamming space were recently derived in
\cite{Barg2020a}, while the question of identifying other shapes of interest is still open.

Studies of the invariance principle in the spherical case also involve interesting analytic problems which are to an extent absent in the finite case. For instance, absolute bounds on discrepancy arise by assuming that the set $Z$ forms a spherical design of strength $t$, meaning that the first $t$ moments of $Z$ are zero. Estimating the tail of the Fourier expansion of discrepancy then enables one to bound $D^{L_2}(Z)$ for 
designs, and these bounds asymptotically agree with the classical lower bounds for certain classes of designs; see \cite{Bilyk2018,Brauchart2019,Skriganov2019} for details. Finally, it is possible to replace finite point sets with general distributions on the sphere \cite{Bilyk2018}. In the Hamming space configurations that form $t$-designs (orthogonal arrays) do not necessarily have small discrepancy, and asymptotic bounds necessarily involve increasing the dimension $n$ (unlike the case of $S^d$ where $d$ is fixed while the size of the set $N$ increases). A question of interest for the Hamming space is to quantify gap to the uniform distribution for different classes of codes. In particular, many code families
come with known bounds on the minimum or maximum distance, and this information could be useful for computing bounds on their discrepancy. Earlier
results of this kind for energy minimization were recently obtained in \cite{BDHSS2019a}.

{\sc Acknowledgments.} I am grateful to Maxim Skriganov for detailed comments on the first version of this paper, and to Peter Boyvalenkov and Patrick Sol{\'e} for useful discussions.
Computations were aided by the Mathematica package {\small \tt fastZeil} which is a part of the {\small\tt RISCErgoSum} bundle written by the group of Prof. Peter Paule, University of Linz, Austria \cite{RISC}. 

This research was partially supported by NSF grants CCF1618603 and CCF1814487.

\section*{Appendix: A simple proof of \eqref{eq:SoS}}

This proof was found after the paper appeared in print. We will show that  
  \begin{equation}\label{eq:37'}
     \sum_{k=0}^n (K_k^{(n)}(i))^2=(-1)^i K_n^{(2n)}(2i)=\binom{2n-2i}{n-i}\binom{2i}i\Big/\binom ni. 
   \end{equation}
Using the generating function \eqref{eq:gf} we obtain
\begin{align*}
\sum_{k=0}^n K_k^{(n)}(w)z^k&\sum_{k'=0}^n K_{k'}^{(n)}(w)z^{-k'}=(1+z)^{n-w}(1-z)^w (1+z^{-1})^{n-w} (1-z^{-1})^w\\
  &=(-1)^w z^{-n}(1+z)^{2(n-w)}(1-z)^{2w}\\
   &=(-1)^w z^{-n}\sum_{k=0}^{2n} K_k^{(2n)}(2w)z^k.
   \end{align*}
Equating the constant terms on both sides, we obtain the first equality in \eqref{eq:37'}. Next, again using \eqref{eq:gf}, we have
  $$
  \sum_{k=0}^{2n} K_k^{(2n)}(n)z^k=(1-z^2)^n,
  $$
and thus,
  $K_{2k}^{(2n)}(n)=(-1)^k\binom nk.$ Now the symmetry relation \eqref{eq:symmetry} implies that 
  $\binom {2n}nK_{2k}^{(2n)}(n)=\binom{2n}{2k}K_n^{(2n)}(2k),$  and we get
  $$
  K_{2k}^{(2n)}(n)=\frac{\binom{2n}{2k}}{\binom {2n}{n}}K_n^{(2n)}(2k)= (-1)^k\binom nk
  $$
  or
  $$
  K_n^{(2n)}(2i)=(-1)^i\frac{\binom in\binom{2n}n}{\binom{2n}{2i}}.
  $$
Rewriting the binomial coefficients on the right, we obtain the second equality in \eqref{eq:37'}.

	\bibliographystyle{IEEEtranS}
	\bibliography{./stolarsky}
\end{document}